\definecolor{col1}{RGB}{236,126,112}
\definecolor{col2}{RGB}{177,184,180}
\definecolor{col3}{RGB}{38,73,68}
\newtheorem{thm}{Theorem}[section]
\newtheorem{prop}[thm]{Proposition}
\newtheorem{lem}[thm]{Lemma}
\newtheorem{cor}[thm]{Corollary}
\newtheorem*{rem*}{Remark}
\theoremstyle{definition}
\newtheorem{rem}{Remark}
\def \e {{\varepsilon}}
\def \R {{\mathbb R}}
\def \H {{\mathbb H}}
\def \N {{\mathbb N}}
\def \W {{V}}
\def \C {{\mathbb C}}
\def \G {\Gamma}
\def \Z {\mathbb{Z}}
\def \pslz  {{\hbox{PSL}_2( {\mathbb Z})} }
\def \slz  {{\hbox{SL}_2( {\mathbb Z})} }
\def \GmodH {{\Gamma\backslash\mathbb H}}
\def \eps{\ensuremath\varepsilon}
\def \supp {{\rm supp\,} }
\DeclareMathOperator{\sgn}{sgn}
\newcommand{\vol}[1]{\hbox{vol}\left( #1 \right)}
\newcommand{\abs}[1]{\left\lvert #1 \right\rvert}
\newcommand{\norm}[1]{\left\lVert #1 \right\rVert}
\newcommand{\inprod}[2]{\left \langle #1,#2 \right\rangle}
\providecommand{\sym}{\operatorname{sym}}
\title[Small scale equidistribution]{Small scale equidistribution of Hecke eigenforms at infinity }
\author[A. Nordentoft]{Asbj\o rn C. Nordentoft}
\address{LAGA, Institut Galil\'{e}e, 99 avenue Jean Baptiste Cl\'{e}ment, 93430 Villetaneuse, France}
\email{acnordentoft@outlook.com}
\author[Y. Petridis]{Yiannis N. Petridis}
\address{Department of Mathematics, University College London, Gower Street, London WC1E 6BT, United Kingdom}
\email{i.petridis@ucl.ac.uk}
\author[M. Risager]{Morten S. Risager}
\address{Department of Mathematical
  Sciences, University of Copenhagen, Universitets\-parken 5, 2100
  Copenhagen \O , Denmark}
\email{risager@math.ku.dk}
\keywords{Equidistribution in shrinking sets, Quantum Variance, Shifted convolution sums}
\subjclass[2000]{Primary 	58J50; Secondary 11F72  }
\date{\today}
\begin{document}
\maketitle
\begin{abstract}We investigate the equidistribution of Hecke eigenforms for $\pslz$ on sets that are shrink\-ing towards the cusp. We show that at scales finer than  the Planck scale they do \emph{not} equidistribute while at scales more coarse than the Planck scale they equidistribute on a full density subsequence of eigenforms. On a suitable set of test functions we compute the variance showing an interesting transition behavior at half the Planck scale.
\end{abstract}

\section{Introduction}
It is a fundamental consequence of Berry's random wave conjecture \cite{Berry:1977a} that the eigenfunctions of the Laplace operator on a hyperbolic manifold $M=\GmodH$   \lq spread out\rq{} in the large eigenvalue limit.
 For a measure $d\nu'$ on $\GmodH$ and a sufficiently nice function $\psi$ on $\GmodH$ we write \begin{equation}
  \inprod{\psi}{d\nu'}=\int_{\GmodH}\psi(z) d\nu'(z).
\end{equation}
Let $\varphi_\lambda$ be $L^2$-normalized eigenfunctions of the Laplacian with eigenvalue $\lambda$, and
consider the measures \begin{equation}d\mu_\lambda=\abs{\varphi_\lambda}^2d\mu, \quad d \nu=\frac{d \mu}{\vol{\GmodH}}  ,\end{equation} where $d\mu(z)=y^{-2}dxdy$ is the uniform measure on the surface.

The  question about whether  the eigenfunctions  indeed spread out is quantified by the question of whether
\begin{equation}\label{equidistribution}
  \inprod{\psi}{d\mu_\lambda}\to\inprod{\psi}{d\nu},\textrm{ as }\lambda \to \infty
\end{equation}
for a suitable set of test functions $\psi$.

 For the full modular group $\G=\pslz$  with $\varphi_\lambda$ being  Hecke--Maass forms this (and much more) was famously proved by Lindenstrauss \cite{Lindenstrauss:2006a} and Sound\-ara\-ra\-jan \cite{Soundararajan:2010a}. Zelditch \cite{ Zelditch:1994} had previously studied the variance sum
\begin{equation}
  \sum_{\lambda\leq \Lambda} \abs{\inprod{\psi}{d\mu_\lambda}-{\inprod{\psi}{d\mu}}}^2
\end{equation}
providing weak but non-trivial upper bounds on this  to conclude \eqref{equidistribution} for a full density subsequence of $\lambda$, see also \cite{Zelditch:1987, Schubert:2006}.
For the full modular group  Sarnak and Zhao \cite{SarnakZhao:2019} were able to prove asymptotics for the variance sum on a suitable set of test functions, and  Nelson \cite{Nelson:2016d, Nelson:2017a, Nelson:2019} has recently found a way to determine the asymptotics also for arithmetic compact hyperbolic surfaces arising from  maximal orders in quaternion algebras.

It is natural to ask if the equidistribution \eqref{equidistribution} still holds if we allow the support of the test function $\psi$ to shrink as a function of $\lambda$. An interesting special case is when $\psi$ is the indicator function of a hyperbolic ball of radius $R$ with $R$ going to zero as a function of $\lambda$. This is the question of equidistribution in {\lq shrinking sets\rq}, which has been analyzed e.g. by Young \cite[Prop 1.5]{Young:2016a}. The physics literature  seems to suggest  that  equidistribution  holds all the way down to the scale of the de Broglie wavelength, which is of the order of $1/\sqrt{\lambda}$, see also \cite{HejhalRackner:1992a}.  Humphries \cite{Humphries:2018} has shown that  below this threshold, also called the Planck scale,  there are cases where equidistribution does \emph{not} hold (he even shows that equidistribution fails slightly above the Planck scale).

 Humphries and Khan \cite{HumphriesKhan:2020} proved that individual equidistribution holds all the way to the Planck scale, if we  restrict to dihedral forms, which form a very thin set of Maass forms.

It should be noted that ergodic theory methods provide equidistribution in shrinking balls for general negatively curved manifolds but typically only for a slow logarithmic rate, see e.g. \cite{Han:2015,HezariRiviere:2016a}.

On the other hand for the eigenfunctions on the Euclidean torus Granville and Wigman \cite{GranvilleWigman:2017} showed individual equidistribution close to the Planck scale and failure of equidistribution at scales at a small power of $\log$ above  the Planck scale. The equidistribution was previously proved by Lester and Rudnick \cite{LesterRudnick:2017} along a full density subsequence.

\subsection{Mass equidistribution for holomorphic Hecke cusp forms}
We may ask questions analogous to the above if we replace the eigenfunction $\varphi_\lambda $ by $y^{k/2}f(z)$, where $f(z)$ is an $L^2$-normalized  holomorphic cusp form of weight $k$ for $\G=\pslz $.  In fact $y^{k/2}f(z)$ \emph{is} an eigenfunction of the weight $k$ Laplacian $\Delta_k$ for the full modular group with eigenvalue  $-k/2(1-k/2)$, which is the bottom of the spectrum for $\Delta_k$. In analogy with \eqref{equidistribution} Holowinsky and Soundararajan \cite{HolowinskySoundararajan:2010} proved that
\begin{equation}
\mu_f(\psi):=\inprod{\psi}{d \mu_f}\to \inprod{\psi}{d \nu}, \textrm{ as }k\to \infty,
\end{equation}  where \begin{equation}d\mu_f=y^k\abs{f(z)}^2d\mu.\end{equation}

Luo and Sarnak \cite{LuoSarnak:2004a} computed the quantum variance of these measures on the modular surface. More precisely they proved that for a fixed compactly supported function $u$ on $\R_+$ we have
\begin{equation}
  \sum_{2\vert k}u\left(\frac{k-1}{K}\right)\sum_{f\in H_k}L(1, \sym^2{f})\abs{\mu_f(\psi)}^2=B_\omega(\psi, \psi)K+O_{\e,\psi}(K^{1/2+\e}).
\end{equation}

Here $H_k$ is an orthonormal basis of Hecke eigenforms, $L(s, \sym^2 f)$ is the symmetric square $L$-function of $f$, and  $\psi$ is a rapidly decaying smooth function of mean zero whose zero-th Fourier coefficient vanishes  sufficiently high in the cusp, and
 $B_\omega(\psi_1, \psi_2)$ is a Hermitian form diagonalized by Hecke--Maass cusp forms. The eigenvalues of $B_\omega$ are  arithmetically significant: they are $\pi/2$ times the central value of the corresponding $L$-function.

\subsection{Equidistribution on shrinking sets}   The question of equidistribution on shrinking sets in the holomorphic setting was considered by Lester, Matom\"aki, and Radziwi\l \l{} \cite{LesterMatomakiRadziwil-l:2018a}. They proved an effective version in terms of the test function of the result of Holowinsky and Soundararajan, allowing to shrink  the test function  at the rate of a small negative power of $\log k$.

We consider the following variant of the  problem about \lq shrinking sets\rq{}: Let $H$ be positive with $H\ge 1$ and define the set
\begin{equation}
  B_H=\{z\in \GmodH : \Im(z)> H\},
\end{equation} considered to be a shrinking ball around the cusp.
 We study the distribution of compactly supported functions on $B_1$ squeezed into $B_H$ using the operator $M_H$ defined by
\begin{equation}
    M_H\psi(z)=\psi(x+i{y}/{H}).
\end{equation}
This may be formulated in a coordinate-independent way, see Section \ref{squeezing-operator}. Similar shrinking has been considered previously by Ghosh and Sarnak \cite{GhoshSarnak:2012} as well as by  Lester, Matom\"aki, and Radziwi\l \l{} \cite{LesterMatomakiRadziwil-l:2018a}.

We will consider mass equidistribition {\lq}high in the cusp{\rq}, by which  we mean that
\begin{equation}\mu_f(M_{H(k)}\psi)\rightarrow \nu(M_{H(k)}\psi), \end{equation}
as $H(k)$ tends to infinity with $k$. The length scale of $B_H$ is of the order  $H^{-1}$, so we might expect equidistribution to hold all the way down to $H^{-1}\gg k^{-1}$, as this is the order of the de Broglie wavelength of $y^{k/2} f(z)$.

Let $B:=B_1$. We will consider the following class of functions:
\begin{equation}C_{0}^\infty(M, B)=\{ \psi\in C_{0}^\infty(M)\mid \supp \psi \subset B\},\end{equation}
where $C_{0}^\infty(M)$ consists of all smooth functions on $M=\GmodH$ decaying rapidly at the cusp, and such that the zero-th Fourier coefficient vanishes sufficiently high in the cusp.
Given $\psi\in C_{0}^\infty(M, B)$, we investigate upper bounds and asymptotics for
\begin{equation}
  \sum_{2\vert k }u\left(\frac{k-1}{K}\right)\sum_{f\in H_k}L(1, \sym^2{f})\abs{\mu_f(M_{H(k)}\psi)-\nu(M_{H(k)}\psi)}^2,
\end{equation}
where $H(k)=(k-1)^\theta$ for some $0\leq \theta <1$, and $u:\R_+\to \R_{\geq 0}$ is smooth with compact support. It turns out that the asymptotics depends crucially on $\theta$.
\subsection{Mass equidistribution below and above the Planck scale}
We first prove that mass equidistribution fails on shrinking sets around the cusp as above for scales finer than the Planck scale. This is consistent with the above prediction and just reflects the fact that $f$ decays rapidly for $y\gg k$, which comes simply from the Fourier expansion.
\begin{prop}
  Let $\theta \geq 1$, i.e. shrinking  below the Planck scale. Then there exists $\psi\in C_0^\infty(M,B)$ such that $\mu_f(M_{(k-1)^\theta}\psi)=o(\nu(M_{(k-1)^\theta}\psi ))$ and $\nu(M_{(k-1)^\theta}\psi )\neq 0$ as $k\rightarrow \infty$.

  \end{prop}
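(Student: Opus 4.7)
The plan is to take $\psi$ to be a simple radial bump, show that $\nu(M_H\psi)$ is only polynomially small in $k$ while $\mu_f(M_H\psi)$ is exponentially small, uniformly in $f$. The reason is that $y^k|f|^2\,d\mu$ concentrates around $y\sim k/(4\pi)$, so any test function pushed above height $H\geq k-1$ sits past the bulk of the mass.

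Concretely, pick $\psi(z):=\chi(\Im(z))$ with $\chi\in C_c^\infty([2,3])$ nonnegative and not identically zero. Then $\supp\psi\subset B$, and $\hat\psi_0(y)=\chi(y)$ vanishes for $y>3$, so $\psi\in C_0^\infty(M,B)$. The substitution $u=y/H$ in a cusp fundamental domain yields immediately
\begin{equation}
\nu(M_H\psi)=\frac{1}{\vol{M}\cdot H}\int_2^3\chi(u)\,\frac{du}{u^2}=\frac{c_\chi}{H},
\end{equation}
with $c_\chi>0$, so $\nu(M_H\psi)\asymp (k-1)^{-\theta}$.

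Since $M_H\psi$ is supported in $B_H$ and $0\leq M_H\psi\leq \|\chi\|_\infty$, we have $|\mu_f(M_H\psi)|\leq \|\chi\|_\infty\,\mu_f(B_H)$. Unfolding in the cusp, Parseval applied to the Fourier expansion $f(z)=\sum_{n\geq 1}c_n e^{2\pi inz}$ of the $L^2$-normalized form gives
\begin{equation}
\mu_f(B_H)=\sum_{n\geq 1}|c_n|^2\int_H^\infty y^{k-2}e^{-4\pi ny}\,dy.
\end{equation}
For $H\geq k-1$ the integrand is decreasing on $[H,\infty)$, and integration by parts (or the asymptotics of the incomplete Gamma function) gives $\int_H^\infty y^{k-2}e^{-4\pi ny}\,dy\ll H^{k-2}e^{-4\pi nH}/n$. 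Combining Deligne's bound on the Hecke eigenvalues with the Rankin--Selberg formula $\|f\|_{\mathrm{Pet}}^2\asymp \Gamma(k)L(1,\sym^2 f)/(4\pi)^k$ and the standard lower bound $L(1,\sym^2 f)\gg(\log k)^{-c}$, one obtains the uniform estimate $|c_n|^2\ll_\e n^{k-1+\e}(\log k)^c(4\pi)^k/\Gamma(k)$. For $H\geq k-1$ the maximum of $n\mapsto n^{k-1}e^{-4\pi nH}$ lies below $n=1$, so the sum is a geometric tail dominated by $n=1$, leaving
\begin{equation}
\mu_f(B_H)\ll_\e H^{k-2}(4\pi)^k\Gamma(k)^{-1}e^{-4\pi H}(\log k)^c.
\end{equation}

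Finally, Stirling in the form $(4\pi H)^k/\Gamma(k)\sim (4\pi eH/k)^k/\sqrt{k}$ reduces the question to the decisive factor $(4\pi eH/k)^ke^{-4\pi H}$. For $\theta=1$ (so $H\sim k$) this is comparable to $\exp(k(1+\log(4\pi)-4\pi))=e^{-c_0 k}$ with $c_0=4\pi-1-\log(4\pi)>0$; for $\theta>1$ the $-4\pi H$ term dominates $k\log k$ and the decay is super-exponential. Hence $\mu_f(M_H\psi)/\nu(M_H\psi)\ll_\e He^{-c_0 k}\to 0$ uniformly in $f$, finishing the proof. The only real obstacle is the bookkeeping linking Stirling to the factor $e^{-4\pi H}$; the underlying point is simply that once $H\geq k-1$ we are past the concentration window $y\sim k/(4\pi)$ of $y^k|f|^2\,d\mu$, so the mass drops exponentially while $\nu(M_H\psi)\sim 1/H$ only polynomially.
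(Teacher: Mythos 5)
Your proof is correct, and it follows the route the paper mentions only in passing (``Alternatively, one can directly estimate the Fourier expansion of $f$'') rather than the one it actually carries out. The paper's proof takes essentially the same test function --- an incomplete Eisenstein series $\psi_V=P_{V,0}$ with $V$ supported in $(1,\infty)$ and $\int V(y)y^{-2}dy\neq 0$, which on $B$ is exactly your radial bump $\chi(\Im z)$ --- but then simply plugs into the already-established asymptotic \eqref{neverending}: for $\theta\geq 1$ the argument $(k-1)^{1-\theta}/(4\pi(n+h/2))$ of the weight is $<1$, hence outside $\supp V$, so the whole shifted-convolution sum vanishes identically and $\mu_f(M_{(k-1)^\theta}\psi_V)=O_\e(k^{-1-\theta+\e})$, to be compared with $\nu\asymp k^{-\theta}$. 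Your version replaces that citation with a self-contained unfolding: Parseval in the cusp, the Rankin--Selberg normalization of the Fourier coefficients together with $L(1,\sym^2 f)\gg_\e k^{-\e}$, and the elementary observation that $\int_H^\infty y^{k-2}e^{-4\pi ny}dy$ sits past the mode $y=(k-2)/(4\pi n)$ once $H\geq k-1$. What you gain is a stronger, explicit conclusion --- uniform exponential (indeed super-exponential for $\theta>1$) decay of $\mu_f(M_H\psi)$, versus the paper's $O(k^{-1-\theta+\e})$ which comes from the error term of \eqref{neverending} --- and independence from the Luo--Sarnak machinery; what you pay is the Stirling bookkeeping and the (mild) input of a lower bound for $L(1,\sym^2 f)$, neither of which the paper's one-line argument needs. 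One cosmetic remark: Deligne's bound is overkill here; even $\lambda_f(n)\ll n^{1/2}$ leaves the $n$-sum geometric and dominated by $n=1$, since the ratio of consecutive terms is $\leq 2^{k}e^{-4\pi H}\ll e^{-ck}$ for $H\geq k-1$.
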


Secondly we  obtain a power-saving bound for the quantum variance sum for general observables all the way down to the Planck scale. This implies that mass equidistribution holds for a density one subsequence of holomorphic cusp forms.
\begin{thm}\label{ergodicity1}Let $0<\theta<1$ and $\psi\in C_{0}^\infty(M,B)$. Then
 \begin{align}
    \sum_{2\mid k} u\left(\frac{k-1}{K}\right) \sum_{f\in H_k}L(1,\sym^2 f) &\abs{\mu_f(M_{(k-1)^\theta}\psi)-\nu(M_{(k-1)^\theta}\psi)}^2\\&= O_{\psi, u} (K^{2-2\theta-\min{(1/5,1-\theta)+\e}}).
\end{align}

  \end{thm}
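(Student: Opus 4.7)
The plan is to apply an effective Luo--Sarnak quantum variance asymptotic to the mean-zero function $\phi = M_H\psi-\nu(M_H\psi)$ and then to bound the resulting main term spectrally. Concretely, one aims for an asymptotic of the form
\begin{equation}
\sum_{2\mid k}u\!\left(\tfrac{k-1}{K}\right)\!\sum_{f\in H_k}L(1,\sym^2 f)\abs{\mu_f(\phi)}^{2} \;=\; K\,B_\omega(\phi,\phi) \;+\; E(K;H),
\end{equation}
in which the error $E(K;H)$ has explicit polynomial dependence on $H=(k-1)^{\theta}$. Since $B_\omega$ is diagonalised by the Hecke eigenbasis, with eigenvalues $(\pi/2)L(1/2,g)$ on Maass cusp forms $g$ and analogous central $\abs{\zeta(1/2+it)}^{2}$-factors on the continuous spectrum, the spectral decomposition of $M_H\psi$ reduces the main term to
\begin{equation}
K\!\sum_{g} L(1/2,g)\abs{\inprod{M_H\psi}{g}}^{2} \;+\; (\textrm{Eisenstein analog}).
\end{equation}

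The second step is to analyse the spectral coefficients of $M_H\psi$. The change of variables $y\mapsto y/H$ gives the integral representation $\inprod{M_H\psi}{g}=H^{-1}\int_{B}\psi(x+iu)\overline{g(x+iHu)}\,dx\,du/u^{2}$; inserting the Fourier expansion of $g$ and using standard Bessel-function estimates, together with integration by parts in $x$, shows that $\inprod{M_H\psi}{g}$ is rapidly decaying outside the spectral window $t_{g}\asymp H$. Parseval yields the total-mass bound $\sum_{g}\abs{\inprod{M_H\psi}{g}}^{2}\leq\norm{M_H\psi}_{2}^{2}=H^{-1}\norm{\psi}_{2}^{2}$. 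Combining this with an averaged central-value estimate for $L(1/2,g)$ in the spectral window $t_{g}\sim H$ --- for example the second moment $\sum_{t_{g}\sim H} L(1/2,g)^{2}\ll H^{2+\e}$, which is Lindel\"of on average --- produces the main-term bound matching the claimed exponent. The Eisenstein contribution is treated in parallel using $\int_{-T}^{T}\abs{\zeta(1/2+it)}^{4}dt\ll T^{1+\e}$ and is strictly smaller.

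The principal obstacle is the effective form of Luo--Sarnak with explicit $H$-dependence in the error term. This forces a re-examination of the shifted convolution sum analysis driving \cite{LuoSarnak:2004a}, this time tracking uniformity in the shift parameter and in the spectral parameters of the inserted Maass forms at the level $H=K^{\theta}$. The two terms in $\min(1/5,1-\theta)$ correspond to the two available sources of saving: the $1/5$ is the subconvex/spectral-large-sieve saving coming from the shifted convolution sums, which is active whenever the spectral window $t_{g}\asymp K^{\theta}$ is not too narrow; the complementary $1-\theta$ is a direct spectral-truncation saving that takes over when $\theta$ is close to $1$ and the window degenerates, in which regime no subconvex improvement is available and one can only exploit the smallness of the window relative to the Planck scale $K$. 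The two regimes meet at the transition $\theta=4/5$, producing the exponent $2-2\theta-\min(1/5,1-\theta)$.
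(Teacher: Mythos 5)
There is a genuine gap here, in two respects. First, your entire argument rests on an ``effective Luo--Sarnak quantum variance asymptotic'' with main term $K\,B_\omega(M_H\psi,M_H\psi)$ and an error with explicit polynomial dependence on $H=(k-1)^\theta$; you correctly flag this as the principal obstacle, but you do not supply it, and it is precisely the content of the bulk of the paper (Theorems \ref{QVthm}, \ref{thm:calculating-variances} and \ref{qvargeneraltest}). Worse, when that uniform-in-$H$ analysis is actually carried out, the main term is \emph{not} $K\,B_\omega(M_H\psi,M_H\psi)$: it is $K^{1-\theta}B_\theta(\psi,\psi)$ for a family of forms $B_\theta$ exhibiting a phase transition at $\theta=1/2$ (the cuspidal part of $B_\theta$ even vanishes for $\theta>1/2$). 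So the quantity you propose to bound spectrally via Parseval and Lindel\"of on average for $L(1/2,g)$ is not what the uniform shifted-convolution analysis produces, and the diagonalization of $B_\omega$ by Hecke--Maass forms plays no role in the proof.

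Second, you misidentify the source of the exponent $\min(1/5,1-\theta)$, and your mechanism would not produce it. The theorem allows $\psi\in C_0^\infty(M,B)$ with $\nu(\psi)\neq 0$, so after splitting $\psi$ into a mean-zero part (handled by Theorem \ref{qvargeneraltest}, which accounts for the $1-\theta$ term) and an incomplete Eisenstein series $P_{V,0}$ of nonzero mean, one must open the square and evaluate the unweighted averages $\sum_f L(1,\sym^2 f)$ and $\sum_f L(1,\sym^2 f)\mu_f(M_{(k-1)^\theta} P_{V,0})$ precisely enough that the main terms cancel. The $1/5$ arises from removing this harmonic weight: $L(1,\sym^2 f)$ is approximated by a Dirichlet polynomial of length $T$, the tail is controlled by the \emph{convexity} bound for $L(s,\sym^2 f)$ in the $k$-aspect, Petersson is applied, and the parameters balance at $T=K^{7/5}$. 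It has nothing to do with a spectral large sieve, with subconvexity for shifted convolutions, or with the width of a spectral window $t_g\asymp K^{\theta}$; indeed the error $O(K^{2-1/5+\e})$ in \eqref{rock} carries no $\theta$-dependence, and the transition at $\theta=4/5$ is simply where $\abs{\nu(M_{(k-1)^\theta}\psi)}^2K^{2-1/5}\asymp K^{2-2\theta-1/5}$ crosses the genuine variance size $K^{1-\theta}$. Your proposal never addresses this non-mean-zero Eisenstein component, which is exactly where the theorem's exponent is decided.
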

Since $\nu(M_{(k-1)^\theta}\psi)$ is of size about $k^{-\theta}$ this supplements the results in \cite[Theorem 1.3]{LesterMatomakiRadziwil-l:2018a} as it shows that equidistribution holds on average at a much finer scale than  individually, as proved in \cite{LesterMatomakiRadziwil-l:2018a}. The precise polynomial saving of 1/5 when $\theta$ is sufficiently small can probably be improved; its proof has as its input the convexity bound in the $k$-aspect of $L(s, \sym
^2f)$.

\subsection{Asymptotics of the quantum variance }
For a set $A$ we let $1_A$ denote the indicator function of that set. Let $C_{0,0}^\infty(M, B)$ denote functions in $C_{0}^\infty(M, B)$ that are orthogonal to the constant function. Let $C_{\rm cusp}^\infty(M, B)$ be the subset of functions with zero-th Fourier coefficient vanishing completely, and $C_{\rm Eis}^\infty(M, B)$ its orthogonal complement inside $C_{0}^\infty(M,B)$.  We note that for $\psi\in C_{0,0}^\infty(M, B)$ we have $\nu(M_{(k-1)^\theta}\psi)=0$. If we restrict to test functions in this space  we can improve on Theorem \ref{ergodicity1} and  obtain an asymptotic result.

Denote by $\tau_1(n)$  the sum of divisors of $n$, $K_s(y)$ the $K$-Bessel function, and $\lambda_\phi(n)$ the $n$-th Hecke eigenvalue for the form $\phi$.

\begin{thm}\label{intro:QuantumVariance}
Let $0<\theta<1$ and fix $u:\R_+\to \R_{\geq 0}$ smooth with compact support.
\begin{enumerate}[label=(\roman*)]
    \item \label{ener}There exists a Hermitian form $B_\theta(\cdot{,}\cdot)$ on $C_{0,0}^\infty(M, B)$ and $\delta_\theta>0$ such that
  \begin{align}
  \sum_{2\vert k }u\left(\frac{k-1}{K}\right)\sum_{f\in H_k}&L(1, \sym^2{f})\abs{\mu_f(M_{(k-1)^{\theta}} \psi)}^2\\
  &= B_\theta(\psi,\psi) \left(\int u(y)y^{-\theta}dy\right) K^{1-\theta}+O_{\psi,\theta}(K^{1-\theta-\delta_\theta}),
\end{align}
for $\psi \in C_{0,0}^\infty(M, B)$.
\item \label{splitting-form}The Hermitian forms $B_\theta(\cdot{,}\cdot)$ have three different regimes in the sense that $B_\theta(\cdot{,}\cdot)$ is constant on each of the three intervals $0<\theta<1/2$, $\theta=1/2$ and $1/2<\theta<1$.\\
The decomposition \begin{equation}C_{0,0}^\infty(M, B)=C_{\rm cusp}^\infty(M, B)\oplus C_{\rm Eis}^\infty(M, B),\end{equation}
into the cuspidal and the Eisenstein part is orthogonal with respect to $B_\theta(\cdot{,}\cdot)$ for all $0<\theta<1$. Furthermore, $B_\theta(\cdot{,}\cdot)$ restricted to $C_{\rm Eis}^\infty(M, B)$ is independent of $\theta$, and $B_\theta(\cdot{,}\cdot)$ is identically zero on $C_{\rm cusp}^\infty(M, B)$ for $\theta>1/2$.

\item \label{extension}The Hermitian forms $B_\theta(\cdot{,}\cdot)$ can be extended to the larger set $1_BC_{0,0}^\infty(M)$ of functions in $C_{0,0}^\infty(M)$ times the characteristic function $1_B$ such that the following holds: On the subset $1_BC_{\rm cusp}^\infty(M)$ of functions with the zero-th Fourier coefficient vanishing, the form $B_\theta(\cdot{,}\cdot)$ is continuous with respect to a certain Sobolev norm $\norm{\cdot}_{2,1}$. The set $C_{\rm cusp}^\infty(M,B)$ is dense in $1_BC_{\rm cusp}^\infty(M)$ with respect to the same norm $\norm{\cdot}_{2,1}$.
\item \label{finishline} If
$\phi_i$ are Hecke--Maass forms with eigenvalue $s_i(1-s_i)$, then the Hermitian form satisfies $B_\theta(1_{B}\phi_1,1_{B}\phi_2)=0$, unless  $\phi_1,\phi_2$ are both even. If $\phi_i$ are both  even, then
\begin{align}
B_\theta(1_{B}\phi_1,1_{B}\phi_2)=4\pi \sum_{m,n\geq 1}  \frac{\tau_1((m,n))\lambda_{\phi_1}(m)\lambda_{\phi_2}(n)}{(mn)^{1/2}} I^{s_1,s_2}_\theta (m,n),\end{align}
where
\begin{equation}
    I_\theta^{s_1,s_2} (m,n)=\int_{\max(m,n)}^\infty K_{s_1-1/2}(2\pi y)\overline{K_{s_2-1/2}(2\pi y)} f_{\theta,m,n}(y)\frac{dy}{y}
\end{equation}
with
\begin{equation}
     f_{\theta,m,n}(y)=
     \begin{cases}
       1, &\textrm{ if }0<\theta<1/2,\\
        e^{-2\pi^2y^2(m^2+n^2)}, &\textrm{ if }\theta=1/2,\\
       0, &\textrm{ if }\theta>1/2.
     \end{cases}
 \end{equation}
\end{enumerate}
\end{thm}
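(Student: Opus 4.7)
The plan is to identify $B_\theta(1_B\phi_1,1_B\phi_2)$ with the main-term coefficient in the asymptotic of part (i), evaluated on $\psi_i=1_B\phi_i$. Since $1_B\phi_i\in 1_BC_{\rm cusp}^\infty(M)$ and $B_\theta$ extends continuously to this space in the $\|\cdot\|_{2,1}$ norm (part (iii)), while $C_{0,0}^\infty(M,B)$ is dense there, we may approximate $1_B\phi_i$ by $\psi_{i,j}\in C_{0,0}^\infty(M,B)$, apply the asymptotic to each pair, and pass to the limit; this reduces the claim to a direct computation of the main term.

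I would first unfold $\mu_f(M_H(1_B\phi))$ with $H=(k-1)^\theta\geq 1$. The strip $\{|x|<1/2,\ y>H\}$ injects into $\Gamma\backslash\mathbb{H}$ via the cusp at infinity, so
\begin{equation}
\mu_f(M_H(1_B\phi))=\int_H^\infty\!\!\int_{-1/2}^{1/2}y^k|f(z)|^2\phi(x+iy/H)\,\frac{dx\,dy}{y^2}.
\end{equation}
Inserting the Fourier expansions $f(z)=\sum_{m\geq 1}\lambda_f(m)m^{(k-1)/2}e(mz)$ and $\phi(w)=\sum_{\ell\neq 0}\rho_\phi(\ell)(\Im w)^{1/2}K_{s-1/2}(2\pi|\ell|\Im w)e(\ell\Re w)$ and carrying out the $x$-integration, the Fourier diagonal $n-m=\ell$ survives, leaving a sum over $(m,n)$ with weight $\lambda_f(m)\lambda_f(n)\rho_\phi(\ell)$ times an explicit $y$-integral. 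Pairing $\ell\leftrightarrow-\ell$ by the swap $m\leftrightarrow n$ preserves the symmetric $y$-kernel while $\rho_\phi(-\ell)=\epsilon_\phi\rho_\phi(\ell)$, so the expression carries a factor $(1+\epsilon_\phi)$. Hence $B_\theta(1_B\phi_1,1_B\phi_2)=0$ as soon as either $\phi_i$ is odd.

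For even $\phi_1,\phi_2$, one substitutes the unfolded expressions into $\sum_f L(1,\sym^2f)\mu_f(M_H(1_B\phi_1))\overline{\mu_f(M_H(1_B\phi_2))}$ and performs the spectral sum over $f$ along the lines that yield part (i). Using Hecke multiplicativity to linearise $\lambda_f(m_1)\lambda_f(n_1)\lambda_f(m_2)\lambda_f(n_2)$ into single Hecke eigenvalues, the $L(1,\sym^2f)$ weight converts the sum into Petersson-harmonic form, and the main Petersson diagonal collapses the four indices $(m_i,n_i)$ subject to $n_i-m_i=\ell_i$ onto pairs $(m,n)$ via arithmetic constraints; combined with $\rho_\phi(\ell)=\rho_\phi(1)\lambda_\phi(\ell)$, this repackages the diagonal as $\sum_{m,n\geq 1}\tau_1((m,n))\lambda_{\phi_1}(m)\lambda_{\phi_2}(n)/(mn)^{1/2}$, matching the claimed arithmetic factor. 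The two remaining $y$-integrals (one per $\mu_f$) then collapse into a single integral via a Laplace/stationary-phase analysis around the common saddle $y_0=(k-1)/(2\pi(m+n))$; the $k$-sum weighted by $u((k-1)/K)$ contributes $\int u(y)y^{-\theta}\,dy\cdot K^{1-\theta}$, and after division this isolates $B_\theta(1_B\phi_1,1_B\phi_2)$ as the claimed $4\pi$-multiple of $I_\theta^{s_1,s_2}(m,n)$.

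The main obstacle is pinning down the transition profile $f_{\theta,m,n}(y)$. After the substitution $y\mapsto(k-1)t/(2\pi(m+n))$ bringing the saddle to $t=1$, the rescaled Bessel argument becomes $|\ell|k^{1-\theta}t/(m+n)$; the interaction of this Bessel variation with the saddle width $\sigma\asymp k^{-1/2}$ (in the $t$-variable) is governed by the dimensionless ratio $k^{1/2-\theta}$. For $\theta<1/2$ the Bessel variation is absorbed harmlessly into the main term and one recovers $f_{\theta,m,n}\equiv 1$; at the critical $\theta=1/2$ the second-order Taylor correction in the saddle expansion of the Bessel is of exactly unit order, producing the Gaussian modulation $e^{-2\pi^2y^2(m^2+n^2)}$; for $\theta>1/2$ the cuspidal contribution falls below the main-term scale and $f_{\theta,m,n}\equiv 0$. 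A secondary difficulty is to verify that the Petersson off-diagonal (Kloosterman) terms and the $\|\cdot\|_{2,1}$-approximation errors in smoothing $1_B\phi_i$ fit into the $O(K^{1-\theta-\delta_\theta})$ error of part (i), so that the main-term identification of $B_\theta$ is unaffected.
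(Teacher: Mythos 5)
Your proposal addresses, in substance, only part \ref{finishline}: parts \ref{ener}--\ref{extension} are invoked as inputs (``along the lines that yield part (i)'', the ``$\norm{\cdot}_{2,1}$-approximation'') rather than proved, and they contain the bulk of the work --- the Petersson/shifted-convolution variance computation of Theorem \ref{QVthm}, the reduction of $\mu_f(M_{(k-1)^\theta}P_{\W,h})$ to shifted convolution sums as in \eqref{neverending}, the partition-of-unity summation over Fourier modes giving \ref{ener} for general $\psi$, the diagonal computation of Remark \ref{remark:hneq0} showing $B_\theta(P_{\W_1,0},P_{\W_2,h})=0$ (which is what gives the cuspidal/Eisenstein orthogonality in \ref{splitting-form}), and the density/continuity statements of Proposition \ref{continuity}. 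Moreover, your plan to identify $B_\theta(1_B\phi_1,1_B\phi_2)$ with the main-term coefficient of \ref{ener} ``evaluated on $\psi_i=1_B\phi_i$'' and then recompute that main term by unfolding $\mu_f(M_H(1_B\phi))$ is not available as stated: $1_B\phi\notin C_{0,0}^\infty(M,B)$ (it is discontinuous at $y=1$), so \ref{ener} does not apply to it, and the paper never establishes the variance asymptotic at $1_B\phi$. The extension in \ref{extension} is instead \emph{defined} by the explicit formula \eqref{Btheta}; once that is done, part \ref{finishline} is a direct substitution of $V_m^{1_B\phi}(y)=\epsilon_{\phi,m}2\lambda_\phi(\abs{m})y^{1/2}K_{s-1/2}(2\pi\abs{m}y)1_{y>1}$ into \eqref{Btheta} followed by a change of variables --- no fresh saddle-point analysis is needed. (Your observation that odd forms drop out via the pairing $\ell\leftrightarrow-\ell$ is correct and matches the paper.)

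The concrete mathematical error is in your explanation of the transition profile $f_{\theta,m,n}$, which you yourself flag as the main obstacle. You attribute the Gaussian at $\theta=1/2$ to the variation of the Maass form's $K$-Bessel factor across the saddle. That variation is $O(\abs{\ell}k^{-1/2})$ uniformly in $\theta$ (the rescaled Bessel argument is $\asymp\abs{\ell}t$ near $t=1$, so it moves by $O(\abs{\ell}k^{-1/2})$ over the saddle width) and contributes nothing to the phase transition; the $K$-Bessel factor may be frozen at the saddle for all $0<\theta<1$. The transition in fact comes from the factor
\begin{equation}
\left(\frac{\sqrt{n(n+h)}}{n+h/2}\right)^{k-1}=\left(1-\frac{h^2}{(2n+h)^2}\right)^{(k-1)/2},
\end{equation}
the mismatch between the geometric mean of the two Fourier frequencies of $f$ (from $(n(n+h))^{(k-1)/2}$ in the coefficients) and their arithmetic mean (from the Gamma-factor evaluation of $\int_0^\infty y^{k-2}e^{-2\pi(2n+h)y}\,dy$). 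Since the support of the squeezed weight forces $n+h/2\asymp k^{1-\theta}$, this equals $\exp\bigl(-\tfrac{k-1}{2}h^2/(2n+h)^2\bigr)+O(kh^4/(2n+h)^4)$, which is $1+o(1)$ for $\theta<1/2$, becomes exactly the Gaussian $e^{-2\pi^2h^2y^2}$ after the substitution $y=(k-1)^{1-\theta}/(4\pi(n+h/2))$ when $\theta=1/2$, and is $\ll e^{-ck^{2\theta-1}}$ for $\theta>1/2$. You land on the correct dimensionless parameter $k^{1/2-\theta}$, but the mechanism you describe would not produce $e^{-2\pi^2y^2(m^2+n^2)}$, so the step on which your whole computation of \ref{finishline} hinges is not justified as written.
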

For the precise form of $B_\theta(\cdot{,}\cdot)$ and $\norm{\cdot}_{2,1}$ we refer to  \eqref{Btheta} and \eqref{sobolev-again}.

Luo and Sarnak \cite[p.773]{LuoSarnak:2004a} proved that  $L(\phi,1/2)$  is  non-negative for  $\phi$  a Hecke--Maass cusp form by realizing it as an eigenvalue of the Hermitian form $B_0$. One may speculate whether  $B_\theta(1_{B}\phi,1_{B}\phi)$  for $0<\theta \leq 1/2$ is also related to central values of $L$-functions. Irrespectively, we may use Theorem \ref{intro:QuantumVariance} to prove that $B_\theta(1_{B}\phi,1_{B}\phi)\geq 0$.
 Seeing this directly from the series representation in Theorem \ref{intro:QuantumVariance} \ref{finishline} seems difficult, and is, therefore,  surprising.

In fact this was our original motivation for extending $B_{\theta}$ in Theorem \ref{intro:QuantumVariance} \ref{extension} to a set containing $1_B\phi$. Notice that $1_B\phi$ together with incomplete Eisenstein series provide a basis for $1_B C_{0,0}^\infty(M)$.

\begin{cor}\label{geq0} If $\phi$ is an even Hecke--Maass cusp form with eigenvalue $s_\phi(1-s_\phi)$ and Hecke eigenvalues $\lambda_\phi(n)$, then
\begin{align}
 \sum_{m,n\geq 1}  \frac{\tau_1((m,n))\lambda_\phi(m)\lambda_\phi(n)}{(mn)^{1/2}}\int_{\max(m,n)}^\infty \abs{K_{s_\phi-1/2}(2\pi y)}^2 \frac{dy}{y}\geq 0 .\end{align}
\end{cor}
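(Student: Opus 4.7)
The plan is to identify the sum in Corollary \ref{geq0} with $B_\theta(1_{B}\phi,1_{B}\phi)/(4\pi)$ for any $\theta\in(0,1/2)$ via Theorem \ref{intro:QuantumVariance}\ref{finishline}, and then to prove non-negativity of this diagonal value by combining the manifestly non-negative asymptotic in part \ref{ener} with the extension/density statements in part \ref{extension}.

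First I would show that $B_\theta(\psi,\psi)\geq 0$ for every $\psi\in C_{0,0}^\infty(M,B)$ and every $0<\theta<1/2$. Every summand on the left-hand side of the asymptotic in Theorem \ref{intro:QuantumVariance}\ref{ener} is non-negative because $L(1,\sym^2 f)>0$, $u\geq 0$, and $\abs{\mu_f(M_{(k-1)^\theta}\psi)}^2\geq 0$. Dividing by $K^{1-\theta}$ and letting $K\to\infty$ therefore yields $B_\theta(\psi,\psi)\int u(y) y^{-\theta}\,dy\geq 0$, and choosing any $u$ with $\int u(y)y^{-\theta}\,dy>0$ forces $B_\theta(\psi,\psi)\geq 0$.

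Next I would pass from $C_{0,0}^\infty(M,B)$ to $1_{B}\phi$ by approximation. The function $1_{B}\phi$ lies in $1_B C_{\rm cusp}^\infty(M)$ but not in $C_{0,0}^\infty(M,B)$. By Theorem \ref{intro:QuantumVariance}\ref{extension}, the subspace $C_{\rm cusp}^\infty(M,B)\subset C_{0,0}^\infty(M,B)$ is dense in $1_{B} C_{\rm cusp}^\infty(M)$ with respect to $\norm{\cdot}_{2,1}$, and the extended form $B_\theta$ is continuous in this norm on $1_{B}C_{\rm cusp}^\infty(M)$. Picking a sequence $\psi_n\in C_{\rm cusp}^\infty(M,B)$ with $\psi_n\to 1_{B}\phi$ in $\norm{\cdot}_{2,1}$, the previous step gives $B_\theta(\psi_n,\psi_n)\geq 0$ for every $n$, and continuity yields $B_\theta(1_{B}\phi,1_{B}\phi)=\lim_n B_\theta(\psi_n,\psi_n)\geq 0$.

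Finally, fix any $\theta\in(0,1/2)$ so that $f_{\theta,m,n}(y)=1$; then Theorem \ref{intro:QuantumVariance}\ref{finishline} applied with $\phi_1=\phi_2=\phi$ (which is allowed since $\phi$ is even) identifies $B_\theta(1_{B}\phi,1_{B}\phi)$ with $4\pi$ times the double sum of Corollary \ref{geq0}, completing the argument. The main obstacle here is packaged entirely into Theorem \ref{intro:QuantumVariance}\ref{extension}: constructing the continuous extension of $B_\theta$ to $1_B C_{\rm cusp}^\infty(M)$ and establishing density in the Sobolev norm $\norm{\cdot}_{2,1}$. Once those ingredients are available, Corollary \ref{geq0} follows mechanically, and the non-negativity --- which, as the authors note, is not visible from the series --- is enforced by the transparent non-negativity of the quantum variance sum.
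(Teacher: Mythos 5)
Your proposal is correct and follows essentially the same route as the paper: the authors likewise deduce $B_\theta(\psi_n,\psi_n)\geq 0$ from the non-negativity of the quantum variance sum (Theorem \ref{qvargeneraltest}), pass to $1_B\phi$ via the density and continuity statements of Proposition \ref{continuity}, and identify the limit with the series of Corollary \ref{geq0}. No gaps; the only difference is that you cite the introduction's Theorem \ref{intro:QuantumVariance} where the paper cites the corresponding body results.
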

\begin{rem}
Let $w:\R\to \R$ be a smooth and bounded weight function with support contained in $[1,\infty)$. Then one can similarly show by using the explicit expression for $B_\theta(\cdot{,}\cdot)$ in \eqref{Btheta} combined with Theorem \ref{intro:QuantumVariance} \ref{ener} for $\psi(z)=w(y)\phi(z)$ that
\begin{align}
 \sum_{m,n\geq 1}  \frac{\tau_1((m,n))\lambda_\phi(m)\lambda_\phi(n)}{(mn)^{1/2}}\int_0^\infty \abs{K_{s_\phi-1/2}(2\pi y)}^2w\left(\frac{y}{m}\right)w\left(\frac{y}{n}\right) \frac{dy}{y}\geq 0 .\end{align}
\end{rem}
\begin{rem}
 We expect that the techniques and results in this paper will work with some modifications also for Maass cusp forms in the same way that  the results in \cite{LuoSarnak:1995} are extended to the Maass case by Sarnak and Zhao \cite{SarnakZhao:2019}. For simplicity and clarity we restrict ourselves to the holomorphic case.
\end{rem}

\subsection{The behavior of holomorphic cusp forms high in the cusp}
Ghosh and Sarnak \cite{GhoshSarnak:2012} considered the distribution of the zeroes of holomorphic modular forms high in the cusp as the weight grows. By the work of Rudnick \cite{Rudnick:2005a} mass equidistribution for holomorphic forms implies equidistribution of their zeroes in the fundamental domain. Ghosh and Sarnak observed that, although the proportion of zeroes in a shrinking ball around the cusp (more precisely $H\gg \sqrt{k\log k}$) was proportional to the area of the domain, the statistical behavior of the zeroes was very different. They observed experimentally that the zeroes tend to localize on the two {\lq real\rq} lines $\Re z=-1/2$ and $\Re z=0$, conjectured that $100\%$ of the zeroes in these shrinking balls around the cusp should lie on these two lines, and obtained some results in this direction. These results were then strengthened by Lester, Matom\"aki, and Radziwi\l\l{}  \cite{LesterMatomakiRadziwil-l:2018a}.

The reason for the qualitative change in the behavior of holomorphic cusp forms high in the cusp has its roots in the fact that for all integers $1\ll l\ll \sqrt{k /\log k}$, we have
\begin{equation}\label{GhoshSarnak}\left(\frac{e}{l}\right)^{k-1}f(x+iy_l)=\lambda_f(l)e(xl)+O(k^{-\delta}),   \end{equation}
where $y_l=(k-1)/{4\pi l}$, and
 $\delta>0$ is some constant. This means that counting zeroes on the real lines reduces to detecting  sign-changes of the Hecke eigenvalues $\lambda_f(l)$, which is exactly what was achieved in \cite{LesterMatomakiRadziwil-l:2018a}.

We observe that our bilinear form $B_\theta(\cdot{,}\cdot)$ exhibits  a {\it phase transition} at $\theta=1/2$, which coincides exactly with the threshold in \cite{GhoshSarnak:2012} and \cite{LesterMatomakiRadziwil-l:2018a}. Combined, these results point towards the phenomenon that, although the mass of holomorphic cusp forms equidistribute all the way down to the Planck scale i.e. $k^{-1}$, the qualitative behavior changes high in the cusp at \emph{half} the Planck scale $k^{-1/2}$. This shows quite clearly in Figure \ref{fig1}, where the holomorphic forms look like random waves in the bottom of the plots, whereas in an intermediate range (at height around $k^{1/2}$ i.e. at half the Planck scale) they are essentially constant on horizontal lines, before they start decaying rapidly high in the cusp  at height around $k/(4\pi)$ i.e. below the Planck scale.
Note that in the region $\sqrt{k}\ll y\ll k$, where $y^k\abs{f(z)}^2$ is essentially constant on horizontal lines, we still expect fluctuations in the $y$ direction (as we expect mass equidistribution to hold all the way down to the Planck scale). In order to see this numerically one needs to consider larger $k$.

The asymptotic \eqref{GhoshSarnak} implies that $y^k|f(x+iy)|^2$ is essentially constant as $x$ varies, at least when $y=y_l$ for some $l$ as above. This provides intuition for the phenomena observed in this paper: $y^k|f(x+iy)|^2$ exhibits very strong cancellation with cuspidal test functions when we go to scales finer than halfway to the Planck scale. On the other hand for incomplete Eisenstein series the behavior is the same all the way down to the Planck scale, according to Theorem \ref{intro:QuantumVariance} \ref{splitting-form}.
\begin{figure}
    \centering
    \includegraphics[scale=1]{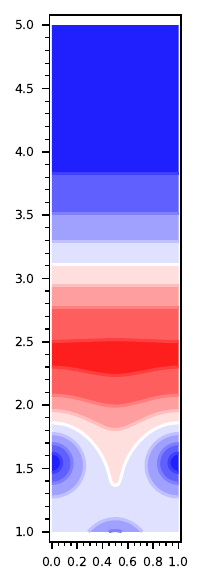}\includegraphics[scale=1]{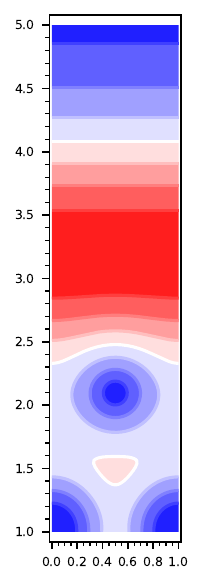}\includegraphics[scale=1]{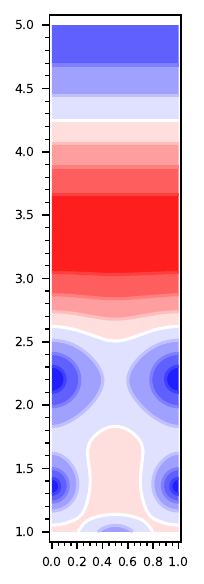}\includegraphics[scale=1]{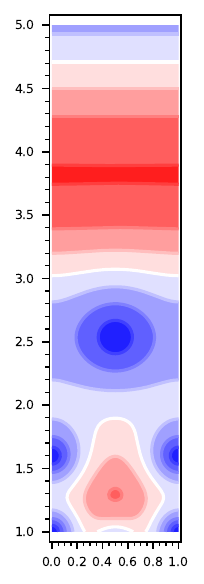}
    \caption{Heat plots of $y^k |f(z)|^2$ with $f\in \mathcal{S}_k(\Gamma_0(2))$ and $k=30,40,42,48$.}
    \label{fig1}
\end{figure}

The structure of the paper is as follows. In Section 2 we study the shifted convolution problem and its variance over a Hecke basis. In Section 3 we use the results of Section 2 to study the quantum variance when we squeeze non-holomorphic Poincar\'e series towards the cusp . In Section 4 we extend the space of observables to the space $C_{0,0}^\infty (M,B)$. In Section 5 we show that quantum ergodicity holds for shrinking sets towards the cusp down to the Planck scale. In Section 6
we complete the proof of Theorem \ref{intro:QuantumVariance} parts \ref{extension} and \ref{finishline}.

\section*{Acknowledgements}
We are grateful to the anonymous referees for their many insightful comments and useful suggestions.

  \section{The variance of shifted convolution sums over a Hecke basis}
An essential tool in understanding questions of equidistribution of Hecke eigenforms is understanding shifted convolution sums.

 Let $f$ be a weight $k$, level one holomorphic cuspidal Hecke eigenform, normalized such that its Fourier expansion \begin{equation}
f(z)=\sum_{n=1}^\infty\lambda_f(n)n^{\frac{k-1}{2}}e(nz)
\end{equation}  satisfies $\lambda_f(1)=1$. As usual,  $e(z)=e^{2\pi i z}$. The normalized Hecke eigenvalues satisfy the Hecke relations
 \begin{equation}\label{hecke-relations}
     \lambda_f(n)\lambda_f(m)=\sum_{d\vert(m,n)}\lambda_{f}\left(\frac{mn}{d^2}\right),
 \end{equation}
 see \cite[(6.38)]{Iwaniec:1997a}.
  Consider the shifted convolution sum
\begin{align}
      A_f^{W}(X,h)& :=\sum_{n\in \N}\lambda_f(n)\lambda_f(n+h) W((n+h/2)/X)\\
\label{after-Hecke-relations}      &=\sum_{d\mid h} \sum_{r\in \N}\lambda_f(r(r+d))W\left(\frac{\frac{h}{d}(r+d/2)}{X}\right),
  \end{align}
where $W:\R_+\to\R$ is smooth and supported in a compact interval, and where in the second line we have used the Hecke relations \eqref{hecke-relations}.

Let $\tau_1(n)=\sum_{d\mid n} d$, and let $L(s, \sym^2 f)$ be  the symmetric square $L$-function associated to $f$, i.e.
\begin{equation}
   L(s, \sym^2 f)=\zeta(2s)\sum_{n=1}^\infty\frac{\lambda_f(n^2)}{n^s}, \quad\textrm{ when }\Re(s)>1,
\end{equation}and is defined on $\C$ by analytic continuation.

  We investigate the variance of the smooth shifted convolution sums $A_f^{W}(X,h)$ over an orthonormal basis of Hecke eigenforms $H_k$ and over $k$ of size $K$. Let $u:\R_+\to\R_{\geq 0}$ be a compactly supported function. We want to understand \begin{equation}\label{understand-this}
\sum_{2\vert k}u\left(\frac{k-1}{K}\right)\frac{2\pi ^2}{k-1}\sum_{f\in H_k} \frac{A_f^{W_1}(X(k),h_1)\overline{A_f^{W_2}(X(k),h_2)}}{L(1, \sym^2 f)},
\end{equation}
where \begin{equation}X(k)=(k-1)^{1-\theta}\end{equation} for some $0<\theta<1$.

 In order to  describe better the dependence on $W, h$ we use Sobolev norms
 \begin{align}
 \begin{split}
 \label{sobolev-norms}\norm{W}_{l,p}^p=&\sum_{0\leq i\leq l}\norm{\frac{d^i}{dy^i}W}_p^p,\\ \norm{W}_{l,\infty}=&\sum_{0\leq i\leq l}\norm{\frac{d^i}{dy^i}W}_\infty.
 \end{split}
 \end{align}

For all compactly supported functions $W$ we choose $a_W>0, A_W>1$ such that $\supp{W}\subseteq [a_W,A_W]$. For $h_1$, $h_2\geq 1$ we denote $\norm{h}_\infty=\max(h_1,h_2)$.

 The main tool in understanding \eqref{understand-this} is the Petersson formula, which states that
  \begin{align} \frac{2\pi ^2}{k-1}& \sum_{f\in H_k} \frac{\lambda_f(n_1)\lambda_f(n_2)}{L(1, \sym^2 f)}\\& = \delta_{n_1,n_2}+2\pi (-1)^{k/2} \sum_{\substack{c\geq 1}} \frac{S(n_1,n_2;c)}{c}J_{k-1}\left( \frac{4\pi \sqrt{n_1n_2}}{c}\right), \label{Petersson}\end{align}
   see e.g. \cite[p. 776]{LuoSarnak:2004a}.
   We will use  the following estimate for the   $J$-Bessel function:
    \begin{equation} \label{good-bound-Bessel} J_{k-1}(x)\ll \left( \frac{ex}{2k}\right)^{k-1}, \quad x>0,\end{equation}
  see e.g. \cite[p. 233]{LiuMasri:2014}.

  To state our theorem we define, for functions $W_1, W_2:\R_+\to \R$ and $h_1,h_2\in \N$,
 \begin{align}\label{BLF} B_{h_1,h_2}(W_1, W_2)=  \tau_1((h_1,h_2))\int_{0}^\infty W_1(h_1 y) \overline{W_2(h_2y)}dy. \end{align}

  We now  prove the following result.
   \begin{thm}\label{QVthm}
   Let $0\leq \theta <1$.
  Let $u:\R_+\to \R_{\geq 0}$ be a smooth compactly supported weight function, and let $W_1, W_2:\R_+\to \R$ be smooth functions compactly supported below $A_{W_i}\geq 1$. Then
  \begin{align}
  \label{SCSsmooth}&\sum_{2\vert k}u\left(\frac{k-1}{K}\right)\frac{2\pi ^2}{k-1}\sum_{f\in H_k} \frac{A_f^{W_1}(X(k),h_1)\overline{A_f^{W_2}(X(k),h_2)}}{L(1, \sym^2 f)}\\
  = &B_{h_1,h_2}(W_1,W_2)\frac{K^{2-\theta}}{2}  \int_{0}^\infty u(y)y^{1-\theta}dy+ O_{W_i,h_i, \theta}(K).
  \end{align}
The implied constant in the error term may be bounded by a constant depending only on $\theta$ times
\begin{equation}(1+\norm{h}_\infty)^{1+\e}(A_{W_1}A_{W_2})^C \norm{W_1}_{C,\infty}\norm{W_2}_{C,\infty}\end{equation} for $C$ sufficiently large depending on $\theta$.
\end{thm}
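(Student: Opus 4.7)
The plan is to open the product $A_f^{W_1}(h_1, X(k))\,\overline{A_f^{W_2}(h_2, X(k))}$ using the Hecke-reduced form \eqref{after-Hecke-relations}, which expresses each factor as a sum over $d_i \mid h_i$ and $r_i \geq 1$ of $\lambda_f(r_i(r_i+d_i))$ weighted by $W_i((h_i/d_i)(r_i+d_i/2)/X)$. The key point is that the integrand then involves only a product of \emph{two} Hecke eigenvalues, so the Petersson formula \eqref{Petersson} can be applied directly to the inner $f$-sum. This decomposes the left-hand side of \eqref{SCSsmooth} into a diagonal part arising from $\delta_{m_1 = m_2}$ (with $m_i := r_i(r_i+d_i)$) and an off-diagonal Kloosterman--Bessel sum indexed by a modulus $c$.

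For the diagonal, the completion-of-square identity $(2r_1+d_1)^2 - (2r_2+d_2)^2 = d_1^2 - d_2^2$ shows that $m_1 = m_2$ forces either $d_1 = d_2 =: d$ (which requires $d \mid (h_1, h_2)$ and collapses the equation to $r_1 = r_2$), or else $(2r_i+d_i)$ satisfy a divisor equation with only $O_{h_1, h_2}(1)$ integer solutions. The principal diagonal contribution (per $k$) is
\[
\sum_{d \mid (h_1, h_2)} \sum_{r \geq 1} W_1\!\Bigl(\tfrac{(h_1/d)(r+d/2)}{X}\Bigr) W_2\!\Bigl(\tfrac{(h_2/d)(r+d/2)}{X}\Bigr).
\]
Applying Poisson summation to the $r$-sum (the zero frequency yielding the integral and the nonzero frequencies contributing $O_{W,h}(1)$ by the smoothness of $W_i$) and substituting $t = (r+d/2)/(Xd)$ converts this into $X(k)\cdot\sum_{d \mid (h_1, h_2)} d \int_0^\infty W_1(h_1 t) W_2(h_2 t)\,dt = X(k)\cdot B_{h_1, h_2}(W_1, W_2)$. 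A second application of Euler--Maclaurin to $\sum_{2\mid k} u((k-1)/K)(k-1)^{1-\theta}$ produces $\tfrac{K^{2-\theta}}{2}\int_0^\infty u(y)y^{1-\theta}\,dy$, and collecting the per-$k$ Poisson errors with the finite sporadic $d_1 \neq d_2$ terms gives cumulative error $O(K)$.

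The main obstacle is the off-diagonal. Swapping the $k$- and $c$-summations, one must bound
\[
\sum_{c \geq 1} \frac{S(m_1, m_2; c)}{c} \sum_{2 \mid k} u\!\left(\tfrac{k-1}{K}\right) (-1)^{k/2}\, J_{k-1}\!\Bigl(\tfrac{4\pi\sqrt{m_1 m_2}}{c}\Bigr)
\]
summed against $W_1 W_2$ over the $(d_i, r_i)$ variables, where $m_i \lesssim X(k)^2 = K^{2-2\theta}$. When $4\pi\sqrt{m_1 m_2}/c \ll K$, the bound \eqref{good-bound-Bessel} gives superpolynomial decay, which combined with the Weil bound $|S(m_1, m_2; c)| \ll \tau(c)(m_1, m_2, c)^{1/2} c^{1/2}$ yields a negligible contribution. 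This handles every $c \geq 1$ when $\theta \geq 1/2$, but for $\theta < 1/2$ there remains the range $c \lesssim K^{1-2\theta}$ in which naive bounds fall short of $O(K)$ by a factor of $K^{1-2\theta}$. Here I would carry out the $k$-sum first via an integral representation of $J_{k-1}$, so that the smooth cutoff $u$ and the sign $(-1)^{k/2}$ provide oscillation and rapid decay in the Bessel argument, following the template of Luo--Sarnak's variance computation. Throughout, the polynomial dependence of the implied constants on $\norm{h}_\infty$, $A_{W_i}$ and $\norm{W_i}_{C, \infty}$ must be propagated through every integration-by-parts step to obtain the claimed uniform bound.
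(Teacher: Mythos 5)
Your overall architecture matches the paper's: Hecke relations to reduce to $\lambda_f(r_1(r_1+d_1))\lambda_f(r_2(r_2+d_2))$, Petersson, then diagonal versus off-diagonal. The diagonal treatment is correct and essentially identical to the paper's (sporadic $d_1\neq d_2$ solutions via the difference-of-squares factorization, Poisson in $r$ for the $d_1=d_2$ terms giving $X(k)B_{h_1,h_2}(W_1,W_2)$, then Poisson/Euler--Maclaurin in $k$). Your observation that \eqref{good-bound-Bessel} kills all $c\gg K^{1-2\theta+\e}$, hence the whole off-diagonal when $\theta\ge 1/2$, also matches the paper's truncation step.

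The gap is the off-diagonal for $\theta<1/2$ in the range $c\ll K^{1-2\theta}$, which is the technical heart of the theorem and which you only name as a strategy ("carry out the $k$-sum first \dots following the template of Luo--Sarnak") without executing. What is actually needed, and what the paper supplies, is: (i) the identity $\sum_{2\mid k}2\pi(-1)^{k/2}J_{k-1}(x)g(k-1)=-2\pi\int\hat g(t)\sin(x\cos 2\pi t)\,dt$ applied with $g(y)=u(y/K)W_1(\cdots y^{\theta-1})\overline{W_2}(\cdots y^{\theta-1})$; (ii) uniform derivative bounds $g^{(m)}(y)\ll_m C_{W_1,W_2,m}K^{-m}$, proved via Fa\`a di Bruno and using crucially that $r_i$ is restricted to the support of $W_i$ after the $\theta$-rescaling --- these give the moment bounds $\int|\hat g(t)t^m|\,dt\ll K^{-m}$; (iii) a Taylor expansion of $\sin(x\cos 2\pi t)$ about $x(1-2\pi^2t^2)$ whose error terms $(xt^4)^{2N}$, $t^{2N}$, etc.\ must be checked to contribute $O(K)$ (here $N=1$ suffices); and (iv) conversion of the leading terms via $\int\hat g(t)\sin(x(1-2\pi^2t^2))\,dt=\int_0^\infty g(\sqrt{2yx})(\pi y)^{-1/2}\sin(y+x-\pi/4)\,dy$ followed by \emph{repeated} integration by parts in $y$, where the number of integrations must be taken large depending on $\theta$ --- a single stationary-phase gain does not close the argument uniformly as $\theta\to 0$, since the losses from the $r_i$- and $c$-ranges grow like powers of $K^{1-2\theta}$. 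None of (ii)--(iv) is routine bookkeeping here because the $\theta$-dependent rescaling changes all the ranges relative to Luo--Sarnak's original setting, and the claimed uniform polynomial dependence on $\norm{h}_\infty$, $A_{W_i}$, $\norm{W_i}_{C,\infty}$ has to be tracked through exactly these steps. As it stands, the assertion that the $k$-average provides the required cancellation is a plan, not a proof.
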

  \begin{proof}
    Using \eqref{after-Hecke-relations} and the Petersson formula \eqref{Petersson}  we find,
    for all $X\geq 1$,
  \begin{align}
\label{some-expression}  &\frac{2\pi ^2}{k-1}\sum_{f\in H_k} \frac{A_f^{W_1}(X,h_1)\overline{A_f^{W_2}(X,h_2)}}{L(1, \sym^2 f)}
   \\
  &=\sum_{\substack{d_1 \mid h_1 \\ d_2 \mid h_2}} \sum_{r_1,r_2\in\N} \delta_{r_1(r_1+d_1)=r_2(r_2+d_2)} W_1\left( \frac{h_1(r_1+d_1/2)}{d_1 X} \right)\overline{W_2\left( \frac{h_2(r_2+d_2/2)}{d_2X} \right)}\\
  +&2\pi (-1)^{k/2} \sum_{\substack{d_1 \mid  h_1 \\ d_2 \mid h_2}} \sum_{r_1,r_2\in \N} W_1\left( \frac{h_1(r_1+d_1/2)}{d_1 X} \right)\overline{W_2\left( \frac{h_2(r_2+d_2/2)}{d_2X} \right)}\\
  &\times \sum_{\substack{c\geq 1}} \frac{S(r_1(r_1+d_1),r_2(r_2+d_2);c)}{c}J_{k-1}\left( \frac{4\pi \sqrt{r_1(r_1+d_1)r_2(r_2+d_2)}}{c}\right).
  \end{align}
We refer to the line with the Kronecker delta as the \emph{diagonal term}, and the rest as the \emph{off-diagonal term}.

  To handle the diagonal term, we observe that for fixed positive  $d_1\neq d_2$ the equation
  \begin{equation}\label{finitely-many}r_1(r_1+d_1)=r_2(r_2+d_2)\end{equation}
  has only finitely many positive solutions. To see this we rewrite \eqref{finitely-many} as
  \begin{equation}\label{finitely-many-2}(2r_1+d_1)^2-(2r_2+d_2)^2=d_1^2-d_2^2.\end{equation}
  Factoring the left-hand side as $(2r_1+d_1+2r_2+d_2)(2r_1+d_1-2r_2-d_2)$
  we see that any solution gives a factorization of  $d_1^2-d_2^2$, and that any factorization of $d_1^2-d_2^2$ comes from at most one solution.
  This shows that there are at most $d(d_1^2-d_2^2)$  solutions to  with $d_1\neq d_2$, where $d(n)$ denotes the number of divisors of $n$; indeed we see that the total contribution from these terms is $O(\norm{h}_\infty^{ \e}\norm{W_1}_\infty\norm{W_2}_\infty)$.

  For the remaining terms, i.e. $d_1=d_2=d$, $r_1=r_2=r$  we apply first Poisson summation in the $r$-variable  and observe that the Fourier transform $g\mapsto \hat g(t)=\int_{\R}g(x)e(tx)dx$ of the function $y\mapsto W_1\left( \frac{h_1(y+d_1/2)}{d_1X} \right)\overline{W_2\left( \frac{h_2(y+
  d_2/2)}{d_2X}\right)}$ at $r$ is bounded by an absolute constant times
  \begin{equation} \label{coffeetime}
      \abs{2\pi r}^{-n}(dX)^{-n+1}\norm{W_1(h_1\cdot)W_2(h_2\cdot)}_{n,1},\end{equation}
 which follows from repeated integration by parts. We now see that
 \begin{equation}
     \sum_{\substack{d_1 \mid h_1 \\ d_2 \mid h_2\\ d_1=d_2}} \sum_{r\in\N} W_1\left( \frac{h_1(r+d_1/2)}{d_1X} \right)\overline{W_2\left( \frac{h_2(r+
  d_2/2)}{d_2X}\right)}
 \end{equation}
 equals the same expression with the sum over $r\in \N$ replaced by the same sum over $r\in \Z$ up to an error term of $O(\norm{h}_\infty
 ^{1+\e}\norm{W_1}_\infty \norm{W_2}_\infty)$.
We then observe that
  \begin{align}
 \label{extendtoZ} \sum_{d\vert(h_1,h_2)} &  \sum_{r\in\Z} W_1\left( \frac{h_1(r+d/2)}{dX} \right)\overline{W_2\left( \frac{h_2(r+
  d/2)}{dX}\right)}\\
  &= \sum_{d\mid (h_1,h_2)} \sum_{r\in \Z}\int_{-\infty}^\infty W_1\left(\frac{h_1 (y+d/2)}{dX}\right)\overline{W_2\left(\frac{h_2 (y+d/2)}{dX}\right)}e(-ry)dy\\
  &= \tau_1((h_1,h_2))\int_{-\infty}^\infty W_1(h_1 y) \overline{W_2(h_2y)}dyX +O(\norm{h}_\infty^{\e}\norm{W_1(h_1\cdot)W_2(h_2\cdot)}_{2,1}),
  \end{align}
where we have extended trivially the $r$-sum to all of $r$, then used Poisson summation and the bound \eqref{coffeetime} with $n=2$. Now we average over $k$ and apply Poisson summation in the $k$-variable. Using integration by parts on the dual side we find that for any $A>0$, we have
  \begin{align*}\sum_{2\vert k} u\left(\frac{k-1}{K}\right)X(k)&= \frac{K^{2-\theta}}{2}\int_0^\infty u(y)y^{1-\theta}dy+O_{u,A}(K^{-A}),\\
  \sum_{2\vert k} u\left(\frac{k-1}{K}\right)&= \frac{K}{2}\int_0^\infty u(y)dy+O_{u,A}(K^{-A}),
  \end{align*}
  which yields the desired main term up to the stated error term.

  For the off-diagonal terms we need to bound
  \begin{align}
 \sum_{2\vert k}u\left(\frac{k-1}{K}\right) 2\pi (-1)^{k/2}& \sum_{\substack{d_1 \mid  h_1 \\ d_2 \mid h_2\\r_1,r_2\in \N}} W_1\left( \frac{h_1(r_1+d_1/2)}{d_1 (k-1)^{1-\theta}} \right)\overline{W_2\left( \frac{h_2(r_2+d_2/2)}{d_2(k-1)^{1-\theta}} \right)}\\
  &\times \sum_{\substack{c\geq 1}} \frac{S(r_1(r_1+d_1),r_2(r_2+d_2);c)}{c}J_{k-1}\left( \frac{\Delta}{c}\right),
  \end{align}
  where $\Delta=4\pi \sqrt{r_1(r_1+d_1)r_2(r_2+d_2)}.$
We mimic the arguments of Luo and Sarnak \cite[p 880--881]{LuoSarnak:2003a}. We start by noticing that
\begin{enumerate}[label=(\roman*)]
    \item\label{one} the summation over $k$  is supported in  $K\ll_u k\ll_u K$,
\item \label{two} the summations over  $r_i$  are supported in
\begin{align}
a_{W_i}\frac{d_i}{h_i}(k-1)^{1-\theta}&\leq (r_i+d_i/2)\leq A_{W_i}\frac{d_i}{h_i}(k-1)^{1-\theta}\left(\ll_u A_{W_i}\frac{d_i}{h_i} K^{1-\theta}\right).
\end{align}
\end{enumerate}

Using again $r_i(r_i+d_i)=(r_i+d_i/2)^2-d_i^2/4$  we see that
\begin{enumerate}[label=(\roman*), resume]
    \item\label{three} in the support of the above  sums, we have \begin{equation}
    \Delta\ll_{u} \norm{h}^\e_\infty A_{W_1}A_{W_2}\frac{d_1d_2}{h_1h_2}K^{2(1-\theta)}.\end{equation}
\end{enumerate}

We want to truncate the sum over $c$ and notice that for $r_1,r_2$ in the support of the sums we may use the bound \eqref{good-bound-Bessel} on the Bessel function and the trivial bound on the Kloosterman sum to get
\begin{align}
    \sum_{\substack{c\geq M}}& \frac{S(r_1(r_1+d_1),r_2(r_2+d_2);c)}{c}J_{k-1}\left( \frac{\Delta}{c}\right)\\
    \ll& \sum_{c\geq M}\left(\frac{ C_uA_{W_1}A_{W_2}d_1d_2K^{1-2\theta}}{h_1h_2c}\right)^{k-1}\ll_u \left(\frac{C_uA_{W_1}A_{W_2}d_1d_2K^{1-2\theta}}{h_1h_2M}\right)^{k-1}\frac{M}{K}.
\end{align}
   We conclude that, if $M=C_uA_{W_1}{A_{W_2}}\frac{d_1d_2}{h_1h_2}K^{1-2\theta+\e}$, this term decays exponentially in $K$. Therefore
 \begin{enumerate}[label=(\roman*), resume]
    \item \label{four} the sum in $c$ above may be truncated at \begin{equation}c\ll_u A_{W_1}A_{W_2} \frac{d_1d_2}{h_1h_2}K^{1-2\theta+\e}\end{equation} \end{enumerate}
up to  an additional  error of
    $\ll_{u} \norm{W_1}_\infty\norm{W_2}_\infty A_{W_1} A_{W_2}K^{-A}$.
  We now quote lemmata 4.1 and 4.2 in \cite{LuoSarnak:2003a} stating that
 for $g$  a smooth function compactly supported  on $\R_+$
 we have
  \begin{align}
  \label{first}  \quad  \sum_{2\vert k} 2\pi (-1)^{k/2} J_{k-1}(x)g(k-1)&=-2\pi \int_{-\infty}^\infty \hat{g}(t) \sin(x\cos(2\pi t))dt,\\
 \label{second}\int_{-\infty}^\infty \hat{g}(t) \sin(x(1-2\pi^2t^2))dt&= \int_0^\infty  \frac{g(\sqrt{2yx})}{(\pi y)^{1/2}}\sin(y+x-{\pi}/{4})dy,\\
\label{third}  \int_{-\infty}^\infty \hat{g}(t) \cos(x(1-2\pi^2t^2))dt&= \int_0^\infty \frac{g(\sqrt{2yx})}{(\pi y)^{1/2}} \cos(y+x-{\pi}/{4})dy.
\end{align}
 In our case we apply \eqref{first} to the function
  \begin{equation}g(y)= u(y/K) W_1\left(\frac{h_1(r_1+d_1/2)}{d_1y^{1-\theta}}\right) \overline{W_2}\left(\frac{h_2(r_2+d_2/2)}{d_2y^{1-\theta}}\right) .\end{equation}
  This shows that the remaining part of the non-diagonal contribution can be bounded by an absolute constant times
  \begin{align}
\label{first-transform}  \sum_{\substack{d_i \mid h_i }} \sum_{r_i\geq 1} \sum_{\substack{c\geq 1}} \abs{\int_{-\infty}^\infty \hat{g}(t) \sin \left( \frac{\Delta}{c}  \cos(2\pi t)\right)dt},
  \end{align}
  with restrictions on the sums as \ref{two}-\ref{four} above. Here we have used the trivial estimate on the Kloosterman sums.

  As in \cite[Eq (4.4)]{LuoSarnak:2003a} we now use a trigonometric identity and Taylor expansions  to get for $x, t\in \R$
  \begin{align}
   \sin(x\cos(2\pi t))&=\sin\left(x(1-2\pi^2t^2)+x\sum_{n\geq 2}(-1)^n\frac{(2\pi t)^n}{(2n)!}\right)\\
  \label{taylorsin} &= \sin(x(1-2\pi^2t^2))\left( \sum_{0\leq n,m\leq N-1} c_{m,n} (xt^4)^{2n}t^{2m}  \right)\\
  &\quad +\cos(x(1-2\pi^2t^2))\left( \sum_{\substack{1\leq n\leq N
  \\0\leq m\leq N-1}} d_{m,n} (xt^4)^{2n-1}t^{2m}  \right)\\
  &\quad +O((xt^4)^{2N}+(xt^4)^{4N}+t^{2N}+t^{4N}),
  \end{align}
  for any $N$, where $c_{m,n}$ and $d_{m,n}$ are real constants. In order to bound the term coming from the error term above, we observe that all derivatives $g^{(m)}$ are supported in $K\cdot \supp(u)$ and we claim that, when $r_1,r_2$ satisfy \ref{one}--\ref{four}, we have the bound
  \begin{equation} \label{boundderivative} g^{(m)}(y)\ll_{u,m}C_{W_1,W_2,m} K^{-m}, \end{equation}
  where $C_{W_1,W_2,m}=\prod_{i=1,2}\norm{W_i}_{m,\infty}A_{W_i}^m$.

  To see why the claim is true we observe from Leibniz' rule
that  $g^{(m)}(y)$ is bounded by an absolute constant (depending on $m$) times  \begin{align*}
   \max_{p_1+p_2+p_3=m} \abs{ \frac{d^{p_1}}{d y^{p_1}}u(y/K) \frac{d^{p_2}}{d y^{p_2}}  W_1\left(\frac{h_1(r_1+d_1/2)}{d_1y^{1-\theta}} \right) \frac{d^{p_3}}{d y^{p_3}}\overline{W_2}\left(\frac{h_2(r_2+d_2/2)}{d_2y^{1-\theta}}
 \right)}.
  \end{align*}

 Now we observe that by the chain rule
  \begin{equation}\frac{d^{p}}{d y^{p}}u(y/K)= u^{(p)}(y/K) K^{-p}\ll_{u,p} K^{-p}. \end{equation}
Using Fa\`a di Bruno's formula for the higher derivative we see that
  \begin{align}
  \frac{d^{p}}{d y^{p}} W_i\left(\frac{h_i(r_i+d_i/2)}{d_iy^{1-\theta}}\right)
 &\ll_p\norm{W_i}_{p,\infty}\sum \prod_{j=1}^p\left(\frac{h_i(r_i+d_i/2)}{d_iy^{-\theta+1+j} }\right)^{m_j}\\
 &=\norm{W_i}_{p,\infty}\sum \left(\frac{h_i(r_i+d_i/2)}{d_iy^{-\theta+1} }\right)^{\sum_{i}m_i}y^{-p}.
 \end{align}
 The sum is over $p$-tuples of integers satisfying $m_1+2 m_2+\cdots pm_p =p$. Using that $W_i$ is supported in $[a_{W_i},A_{W_i}]$ we see that we may bound the term inside the   parentheses in the last equation by $A_{W_i}$. For $y$ in the support of $g$ we have $y\in K\cdot \supp{u}$ so for such $y$ we get
  \begin{equation}\frac{d^{p}}{d y^{p}} W_i\left(\frac{h_i(r_i+d_i/2)}{d_iy^{1-\theta}}\right) \ll_{u,p}{\norm{W_i}}_{p,\infty}A_{W_i}^p K^{-p}.  \end{equation}
  Combining these bounds proves the claim \eqref{boundderivative}.

  From  \eqref{boundderivative} it follows that $\widehat{g^{(m)}}(y)\ll_{u,m}C_{W_1,W_2,m} K^{-(m-1)}$. Additionally partial integration gives  $\widehat{g^{(m)}}(t)\ll_{u,m} C_{W_1,W_2,m+l}\abs{t}^{-l}K^{-(m+l-1)} $ so by using $\widehat{g^{(m)}}(y)= (-2\pi i y)^m \hat{g}(y)$ we may conclude, by using the first bound for $\abs{t}\leq K^{-1}$ and the second bound with $l=2$ when $\abs{t}>K^{-1}$  that
  \begin{equation}\label{ref:zombie}
  \int_{-\infty}^\infty \abs{\hat{g}(t) t^m} dt \ll_{u,m} C_{W_1,W_2,m+2}K^{-m}.
  \end{equation}

 Using this bound we see that, when we use  the Taylor expansion \eqref{taylorsin}, the contribution from
 \begin{enumerate}
     \item $((\Delta/c)t^4)^{2N}$ is $\ll_u \norm{h}_\infty^\e C_{W_1,W_2,8N+2}\prod_{i=1,2}A_{W_i}^{2N+1}K^{(1-\theta)2(2N+1)-8N}$,
     \item $((\Delta/c)t^4)^{4N}$ is $\ll_u\norm{h}_\infty^\e C_{W_1,W_2,16N+ 2}\prod_{i=1,2}A_{W_i}^{4N+1}K^{(1-\theta)2(4N+1)-16N}$,
     \item $t^{2N}$ is $\ll_u \norm{h}_\infty^\e C_{W_1,W_2, 2N+2}\prod_{i=1,2}A_{W_i}^{2}K^{(3-4\theta)-2N}$,
     \item $t^{4N}$ is  $\ll_u \norm{h}_\infty^\e C_{W_1,W_2, 4N+2}\prod_{i=1,2}A_{W_i}^{2}K^{(3-4\theta)-4N}.$
 \end{enumerate}
 We note that for $N=1$ all terms are $\ll_u \norm{h}_\infty^\e C_{W_1,W_2,18}\prod_{i=1,2}A_{W_i}^5  K$.

To bound the remaining terms involving \begin{equation}\label{ref:star}e(x,t):=\sin(x(1-2\pi^2t^2))c_{00}+\cos(x(1-2\pi^2t^2))d_{01}xt^4\end{equation} coming from the Taylor expansion, we combine $\widehat{g^{(m)}}(y)= (-2\pi i y)^m \hat{g}(y)$ with  \eqref{third} which gives the bound
  \begin{equation}\label{it-never-ends}
\int_{-\infty}^\infty\hat g(t)e({\Delta}/{c},t)dt  \ll \max_{\substack{(m,m')=(4,1), (0,0)\\\pm}} \abs{\left(\frac{\Delta}{c}\right)^{m'} \int_{0}^\infty g^{(m)}\left(\sqrt{\frac{2\Delta}{c}y}\right)y^{-1/2} e^{\pm iy} dy},
  \end{equation}
  where we used Euler's formulas for sine and cosine. Now we apply partial integration to the integral with $e^{iy}$ as one of the functions.

  For $r_1,r_2, \Delta, c$ as in \ref{two}--\ref{four},i.e. where the terms in the sum \eqref{first-transform} might be non-vanishing, we claim that for any $n,m\in \mathbb{Z}_{\geq 0}$
\begin{equation}\label{ref:starstar}
\frac{d^n}{dy^n}\left(g^{(m)}\left(\sqrt{\frac{2\Delta}{c}y}\right) y^{-1/2}\right)\ll_{u ,n,m} \frac{C_{W_1, W_2, m+n}}{K^m y^{1/2+n}}.
\end{equation}

To see this we note that the left-hand side is non-zero only if $\Delta y/c\asymp_u K^2$.
By using the Leibniz rule and Fa\`a di Bruno's formula we see that
\begin{align}
\frac{d^n}{dy^n}&\left(g^{(m)}\left(\sqrt{\frac{2\Delta y}{c} }\right) y^{-1/2}\right)\ll_n \sum_{i=0}^n\abs{\frac{d^i}{dy^i}\left(g^{(m)}\left(\sqrt{\frac{2\Delta y}{c}}\right)\right)y^{-1/2-(n-i)}}\\
&\ll_{u,n}\sum_{i=0}^n\abs{\sum_{m_1,\ldots, m_i}\left(g^{(m+m_1+\ldots m_i)}\left(\sqrt{\frac{2\Delta y}{c}}\right)\right)\prod_{j=1}^i \left(\sqrt{\frac{2\Delta y}{c}}y^{-j}\right)^{m_j}}y^{-1/2-(n-i)}\\
&\ll_{u,n} C_{W_1, W_2, m+n}K^{-m} y^{-1/2-n}.
\end{align}
Here the inner sum is over $m_1, \ldots, m_i$ satisfying $m_1+2m_2+\ldots+im_i=i$ and in the last line we have used \eqref{boundderivative} and that $\Delta y/c\asymp_u K^2$.

  For $(m,m')=(4,1)$ in \eqref{it-never-ends} we use the claim with $n=0$ and for $(m,m')=(0,0)$ we take a general  $n$ which will eventually depend on $\theta$, and we find, by using integration by parts as described above,
 \begin{equation}
     \int_{-\infty}^\infty\hat g(t)e({\Delta}/{c},t)dt \ll_u
     C_{W_1,W_2,4}\frac{K^{-3}\Delta^{1/2}}{c^{1/2}}+C_{W_1,W_2,n}\frac{K^{1-2n}\Delta^{n-1/2}}{c^{n-1/2}}.
 \end{equation}
  Plugging this bound back in the sum \eqref{first-transform} and using the restriction \ref{two}-\ref{four} gives the result by choosing $n$ sufficiently large depending on $\theta$.
  \end{proof}

\begin{rem}
Note the resemblance between Theorem \ref{QVthm} and \cite[Thm 1.3]{NordentoftPetridisRisager:2020}.
 Whereas \cite[Thm 1.3]{NordentoftPetridisRisager:2020} is restricted to a range where the contribution of the individual off-diagonals  are essentially trivial due to the decay of the $J$-Bessel function (corresponding to $\theta>1/2$), we note that for $\theta \leq 1/2$ we need to exploit additional cancellation between the $J_{k-1}$-Bessel functions  for different $k$.
\end{rem}

  \section{Computing the quantum variance}
We now explain how the above results may be used to understand quantum variance for shrinking sets around the cusp.

\subsection{Squeezing sets towards cusps}\label{squeezing-operator}
Let $M=\GmodH$ be a finite volume hyperbolic surface. Then $M$ admits a decomposition
\begin{equation}
    M=M_0\cup Z_1\cup\ldots\cup Z_l,
\end{equation} where $M_0$ is compact and $Z_i$ is isometric to \begin{equation}
    Z_i\simeq S^1\times ]a_i,\infty[,
\end{equation}  for some $a_i>0$ with the metric on $S^1\times ]a_i,\infty[ $ equal to
\begin{equation}
    ds^2=\frac{dx^2+dy^2}{y^2}
\end{equation}
for $(x,y)\in S^1\times ]a_i,\infty[$. In the literature the regions $Z_i$ are called
horoball cusp neighbourhoods,
horocusps,
cuspidal zones,
Siegel sets,
horocyclic regions,
or simply (by an abuse of notation) cusps.
These subregions $Z_i$ are unbounded regions with boundary  the horocycle $(S^1\times \{a_i\})$ and a point (the cusp).

 We may assume that $Z=Z_1$ corresponds to a cusp at infinity. We now consider a measurable set $B\subseteq Z$  of hyperbolic volume $\vol{B}>0$ and define, for every $H\geq \vol{B}^{-1}$  the injective map

\begin{equation}
    \begin{tikzcd}[row sep=0pt,column sep=1pc]
 S_H^B\colon B \arrow{r} & Z \\
  {\hphantom{f\colon{}}} x+iy \arrow[mapsto]{r} & x+i\vol{B}H y,
\end{tikzcd}
\end{equation}
pushing  the region $B$ up towards the cusp at infinity.  We note that this may be formulated as a scaling along a geodesic going to the cusp thereby defining $S_H^B$ in a coordinate-free way.
We let $B_H=S_H^B(B)$ and notice that by a simple change of variables
\begin{align}
    \vol{B_H}&=\int_{a_1}^\infty\int_0^1 1_{B}((S_H^B)^{-1}z)\frac{dxdy}{y^2}\\
    &= \frac{1}{\vol{B}H} \int_0^\infty\int_{0}^{1}1_{B}(z)\frac{dxdy}{y^2}=\frac{1}{H}.
\end{align}
For $A\subseteq M$ we let
\begin{equation}
    L^2(M, A)=\{ f\in L^2(M) :  \supp{f}\subseteq A \}
\end{equation}
and define \emph{the squeezing operator}
\begin{equation}
    \begin{tikzcd}[row sep=0pt,column sep=1pc]
 M_H^B\colon L^2(M, B) \arrow{r} & L^2(M, B_H) \\
  {\hphantom{M_H^B\colon{}}} f \arrow[mapsto]{r}&{f\circ(S_H^B)^{-1}}
\end{tikzcd}
\end{equation}
i.e. $M_H^Bf(z)=f(x+iy/(\vol{B}H))$. We note that $M_H^B$ loosely speaking squeezes the function $f$ into the region $B_H$, which moves towards the cusp at infinity.

A simple change of variable computation -- similar to the volume computation of $\vol{B_H}$ above  -- shows that for $\varphi\in L^2(M, B)$
\begin{equation}
    \norm{M_H^B\varphi}^{2}=\frac{1}{\vol{B}H}\norm{\varphi}^2,\quad  \inprod{M_H^B\varphi}{1}=\frac{1}{\vol{B}H}\inprod{\varphi}{1}.
\end{equation}

We now specialize to $\G=\pslz$  and $Z=S^1\times ]1,\infty[$. For $T>1$ we let \begin{equation}B_T(\infty)=\{z \in Z : \Im(z)> T\},\end{equation} which we consider to be a ball around the cusp at infinity. A trivial computation shows that $\vol{B_{T}(\infty)}=1/T$. Fix now $T_0>1$ and let $B=B_{T_0}(\infty)\subseteq Z$.
With this choice of $B$ the squeezed set $B_H$ does not depend on $T_0$,
since we have $B_H=S_H^B(B)=B_H(\infty)$. Note, however, that the squeezing operator $M
^{B}_H$ still depends on the choice of $T_0$.

\subsection{Mass equidistribution in squeezed sets}
We now  consider the notion of mass equidistribution in the context of the squeezed sets as above:
Fix $H$, $\varphi\in L^{2}(M,B)$. It follows from the mass equidistribution theorem of Soundararajan and Holowinsky \cite{HolowinskySoundararajan:2010} that \begin{equation}
\int_{B_H}(M_H^B\varphi)(z) y^k\abs{f(z)}^2d\mu(z)=\frac{1}{\vol{M}}\int_{B_H}(M_H^B\varphi)(z) d\mu(z)+o_{M_H^B\varphi}(1),
\end{equation}
as $k\to\infty$.

We investigate what condition  on $H$ as a function of  $k$ implies that
\begin{equation}\label{equidistribution-shrinking-infinity}
\int_{B_H}(M_H^B\varphi)(z) y^k\abs{f(z)}^2d\mu(z)=\frac{1}{\vol{M}}\int_{B_H}(M_H^B\varphi)(z) d\mu(z)+o\left(\int_{B_H}\abs{M_H^B\varphi}d\mu\right)
\end{equation}
as $k, H\to\infty$.

Choosing $\varphi=1_B$ this simplifies to the question of when \begin{equation}\label{equidistribution-shrinking-infinity-special-case}
\int_{B_H}y^k\abs{f(z)}^2d\mu(z)=\frac{1}{H\vol{\GmodH}}+o(H^{-1}),
\end{equation}
as $k, H\to\infty$. However, we  investigate also more general test functions  $\varphi$.

 For the rest of the paper we fix $B=B_1(\infty)$, and consider the situation above for $H=(k-1)^\theta$ for some $\theta>0$, i.e. we consider
\begin{equation}M_{(k-1)^\theta}:=M^B_{(k-1)^\theta},\end{equation}
i.e. $M_{(k-1)^\theta}f(z)=f(x+iy/(k-1)^\theta).$
  We  investigate the mass equidistribution when the test function is squeezed via this operator  by considering the squeezing of the non-holomorphic Poincar\'{e} series
  \begin{equation}
      P_{\W,h}(z)= \sum_{\gamma \in \Gamma_\infty \backslash \Gamma} \W(y(\gamma z)) e(h x(\gamma z)),
  \end{equation}
 where $\W:\R_+\to\C$ is a smooth compactly supported function with support contained in $(1,\infty)$, and $x(z),y( z)$ are the real and imaginary parts of $z$.  In other words, we want to understand the asymptotic properties  of
  \begin{equation}\mu_f(M_{(k-1)^\theta}P_{\W, h}): = \langle M_{(k-1)^\theta}P_{\W,h}(z), y^k \abs{f(z)}^2 \rangle, \quad   k\to\infty.\end{equation}
We note that with our assumption on $\W$ the function $P_{\W,h}$ is supported on $B$, and that these  series actually span $L^2(M, B)$. In fact
\begin{equation}P_{\W,h}(z)=\W(y)e(hx) \quad \hbox{ for } z\in B. \end{equation}
For $h_1h_2\neq 0$ we define
\begin{equation}
    B_{\theta}(P_{\W_1,h_1},P_{\W_2,h_2})=\frac{\pi}{4} \tau_1((\abs{h_1},\abs{h_2}))\int_0^\infty \W_1(\frac{y}{\abs{h_1}}) \overline{\W_2(\frac{y}{\abs{h_2}})}f_{\theta,h_1,h_2}(y)\frac{dy}{y^2},
\end{equation}
where
 \begin{equation}
     f_{\theta,h_1,h_2}(y)=
     \begin{cases}
       1 &\textrm{ if }0<\theta<1/2,\\
        e^{-2\pi^2y^2(h_1^2+h_2^2)} &\textrm{ if }\theta=1/2,\\
       0 &\textrm{ if }\theta>1/2.
     \end{cases}
 \end{equation}

When $\theta =0$ we define $B_0$ to be the form $B_\omega$ defined by Luo and Sarnak in \cite[Eq (15)]{LuoSarnak:2004a}.
  \begin{thm}\label{thm:calculating-variances}
  Let $u:\R_+\to \R_{\geq 0}$ be a smooth compactly supported weight function, and let  $\W_1,\W_2$ be as above. For $h_1 h_2\neq 0$ and $0\leq \theta<1 $, we have
  \begin{align}
  \label{qvariance}\sum_{2\mid k} u\left(\frac{k-1}{K}\right)&
  \sum_{f\in H_k}L(1,\sym^2 f)  \mu_f(M_{(k-1)^\theta}P_{\W_1, h_1}) \overline{\mu_f(M_{(k-1)^\theta}P_{\W_2, h_2})}\\
   &= B_{\theta}(P_{\W_1,h_1},P_{\W_2,h_2}) \int_0^\infty u(y) y^{-\theta}dy\, K^{1-\theta} +O_{\theta,\eps, V_i,h_i}(K^{1-\theta-\delta_\theta+\e}),
  \end{align}
  where
  \begin{align}
  \delta_\theta  =  \begin{cases} (1+\theta)/2 & \theta\in (0,1/5), \\
  1-2\theta& \theta\in [1/5,1/2),\\
  1/2& \theta=1/2,\\
   1+2\theta& \theta\in (1/2,1).\end{cases}
\end{align}
  The implied constant in the error term may be bounded by a constant depending only on $\theta$, $\epsilon$ times
\begin{equation}(1+\norm{h}_\infty)^{C}(A_{V_1}A_{V_2})^C \norm{V_1}_{C,\infty}\norm{V_2}_{C,\infty},\end{equation} for $C$ sufficiently large depending on $\theta$.
\end{thm}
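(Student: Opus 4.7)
The plan is to unfold $\mu_f(M_{(k-1)^\theta}P_{\W,h})$ against the Fourier expansion of $f$ and reduce the quantum variance to a shifted convolution variance handled by Theorem~\ref{QVthm}. On the standard fundamental domain $P_{\W,h}(z)=\W(y)e(hx)$ (since $\supp\W\subset (1,\infty)$), so unfolding and integrating in $x$ (assuming $h>0$ without loss of generality) yields
\begin{equation*}
\mu_f(M_{(k-1)^\theta}P_{\W,h}) = \frac{1}{\norm{f}^2}\sum_{m\geq 1}\lambda_f(m)\lambda_f(m+h)(m(m+h))^{(k-1)/2}I_k(m,h),
\end{equation*}
where $I_k(m,h)=\int_0^\infty \W(y/(k-1)^\theta)e^{-2\pi(2m+h)y}y^{k-2}\,dy$. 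A Laplace expansion around the saddle $y^*=(k-2)/(2\pi(2m+h))\asymp(k-1)^\theta/u$, with $u:=(m+h/2)/X(k)$ and $X(k)=(k-1)^{1-\theta}$, confines the integrand to a window of width $O(k^{\theta-1/2})$, much narrower than the scale $(k-1)^\theta$ on which $\W$ varies. Standard Laplace estimates then give $(m(m+h))^{(k-1)/2}I_k(m,h) = \Gamma(k-1)(4\pi)^{-(k-1)}W_k(u)$ with $W_k$ compactly supported in $(0,\infty)$ and $W_k\to W_\infty$, $W_\infty(u):=\W(1/(4\pi u))$, uniformly on compacta together with all derivatives and Sobolev norms.

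Combining this with the Luo--Sarnak normalization $\norm{f}^2\asymp(k-1)\Gamma(k-1)L(1,\sym^2 f)(4\pi)^{-k}$ rewrites the variance sum (up to negligible error) as a constant multiple of
\begin{equation*}
\sum_{2\mid k}\frac{u((k-1)/K)}{k-1}\cdot\frac{2\pi^2}{k-1}\sum_f \frac{A_f^{W_{1,k}}(h_1,X(k))\overline{A_f^{W_{2,k}}(h_2,X(k))}}{L(1,\sym^2 f)}.
\end{equation*}
Applying Theorem~\ref{QVthm} with $u$ replaced by $y\mapsto u(y)/y$ (to absorb the extra factor $1/(k-1)\sim 1/K$, which shifts the $K^{2-\theta}$ main term down to $K^{1-\theta}$), and using the uniform Sobolev control on $W_{i,k}$, the diagonal part produces $B_{h_1,h_2}(W_{1,\infty},W_{2,\infty})\cdot\tfrac{1}{2}\int u(y)y^{-\theta}dy\cdot K^{1-\theta}$. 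The change of variables $t=1/(4\pi y)$ in the defining integral of $B_{h_1,h_2}$ converts this into $B_\theta(P_{\W_1,h_1},P_{\W_2,h_2})\int u(y)y^{-\theta}dy\cdot K^{1-\theta}$ with $f_{\theta,h_1,h_2}\equiv 1$, confirming the claim in the regime $0<\theta<1/2$.

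For $\theta\geq 1/2$ the off-diagonal contribution in Petersson, absorbed into the $O(K)$ error of Theorem~\ref{QVthm}, becomes comparable to or larger than the diagonal main term of size $K^{1-\theta}$, and must be extracted explicitly. Revisiting the Taylor expansion~\eqref{taylorsin} of $\sin(x\cos 2\pi t)$ with $x=\Delta/c\asymp K$ at $\theta=1/2$, the quadratic-phase piece $\sin(x(1-2\pi^2t^2))$, paired via \eqref{second}--\eqref{third} against the $W_{i,k}$-integrals, produces the Gaussian kernel $e^{-2\pi^2 y^2(h_1^2+h_2^2)}$ defining $f_{1/2,h_1,h_2}$. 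For $\theta>1/2$ the same stationary-phase analysis yields essentially complete cancellation, giving $B_\theta=0$ and the saving $\delta_\theta=1+2\theta$. The sharper saving $\delta_\theta=(1+\theta)/2$ in the range $\theta\in(0,1/5)$ is obtained by invoking the convexity bound for $L(s,\sym^2 f)$ in the $k$-aspect (as indicated after Theorem~\ref{ergodicity1}) to improve the off-diagonal estimate beyond what Theorem~\ref{QVthm} directly provides.

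The hard part will be the stationary phase analysis at $\theta=1/2$: the off-diagonal oscillatory integrals sit precisely on the boundary between full cancellation ($\theta>1/2$) and relative negligibility ($\theta<1/2$), so one must retain the entire quadratic phase $-2\pi^2 x t^2$ and evaluate the combined Laplace--stationary-phase integral---simultaneously involving the saddle of $I_k$ and the oscillatory kernel arising from the $k$-average of the Bessel functions---to extract the precise Gaussian factor $e^{-2\pi^2 y^2(h_1^2+h_2^2)}$ rather than merely bounding these contributions.
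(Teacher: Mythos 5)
Your opening reduction (unfold, treat the archimedean integral by a saddle point, feed the resulting shifted convolution sums into Theorem \ref{QVthm} with the weight $u(y)/y$ to account for the extra factor $2\pi^2/(k-1)$) is indeed the paper's strategy, and it gives the correct main term for $0<\theta<1/2$. But your key analytic claim is wrong, and it erases precisely the term that creates the phase transition. The correct evaluation of the unfolded integral is
$(m(m+h))^{(k-1)/2}I_k(m,h)\approx \Gamma(k-1)(4\pi)^{-(k-1)}\left(\tfrac{\sqrt{m(m+h)}}{m+h/2}\right)^{k-1}\W\!\left(\tfrac{(k-1)^{1-\theta}}{4\pi(m+h/2)}\right)$,
and the factor $\left(\tfrac{\sqrt{m(m+h)}}{m+h/2}\right)^{k-1}=\exp\!\left(-\tfrac{h^2}{8}\tfrac{k-1}{(m+h/2)^2}+\cdots\right)$ tends to $1$ only when $\theta<1/2$, since $m+h/2\asymp k^{1-\theta}$ in the support. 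So your assertion that $W_k\to\W(1/(4\pi\cdot))$ uniformly in all Sobolev norms for every $\theta$ is false for $\theta\geq 1/2$. In the paper (see \eqref{neverending} and the three cases that follow) this factor is Taylor-expanded for $\theta<1/2$, producing the weights $V(1/(4\pi y))y^{-2i}$ whose $(i,j)\neq(0,0)$ contributions give a $K^\theta$ error; at $\theta=1/2$ it equals $\exp(-\tfrac{h^2}{8}\tfrac{k-1}{(n+h/2)^2})+O(h^4/k)$ and is absorbed into the modified weight $W^*$, so the Gaussian kernel $e^{-2\pi^2y^2(h_1^2+h_2^2)}$ comes out of the \emph{diagonal} term of Theorem \ref{QVthm} applied with $W^*$; and for $\theta>1/2$ it decays like $\exp(-ck^{2\theta-1})$, so $\mu_f(M_{(k-1)^\theta}P_{\W,h})\ll k^{-1-\theta+\e}$ individually and $B_\theta=0$ with no averaging and no stationary phase at all.

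Consequently your proposed mechanism for $\theta\geq 1/2$ cannot work: after the extra $2\pi^2/(k-1)$ the off-diagonal contribution inherited from the $O(K)$ error of Theorem \ref{QVthm} is $O(K^{\e})$, never comparable to the main term $K^{1-\theta}$ for $\theta<1$, so no re-examination of \eqref{taylorsin} paired with \eqref{second}--\eqref{third} can produce the Gaussian main term from the Kloosterman side; the ``hard part'' you flag is not where the difficulty lies. Two further corrections: the exponent $\delta_\theta=(1+\theta)/2$ for $\theta\in(0,1/5)$ has nothing to do with convexity for $L(s,\sym^2 f)$ — that input is used only in Theorem \ref{ergodicity1}/\ref{ergodicity} for the non--mean-zero (incomplete Eisenstein) part; here it arises from the Cauchy--Schwarz treatment of the cross terms between the main sum in \eqref{neverending} and its $O(k^{-1-\theta+\e})$ error (which itself uses Theorem \ref{QVthm} to control the second moment of $A_f$), giving $K^{1/2-3\theta/2+\e}$, to be compared with the $K^\theta$ loss from the higher Taylor terms. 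And even in the range $0<\theta<1/2$, stating that $W_k\to W_\infty$ ``uniformly on compacta with all derivatives'' without a quantitative rate only yields $o(K^{1-\theta})$; to obtain the stated power saving you must track the error in replacing the archimedean factor by its expansion, as the paper does term by term.
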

  \begin{proof}
By linearity we may assume $V_i$ to be real. For such a test function, say $V$,  we observe that $\mu_f(M_{(k-1)^\theta}P_{\W, h})$ is a real number (as is seen by unfolding and using that $\abs{f}^2$ is even). Therefore we have $\mu_f(M_{(k-1)^\theta} P_{\W,h}) =\mu_f(M_{(k-1)^\theta} P_{\W,-h}) $. So we may assume $h_i>0$ below.  We also notice that if $V$ is supported in $(1,A_V]$ then, up to an absolute constant times a power of $h$  times a power of $A_V$,  the functions $W_i(y)=V((4\pi y)^{-1})y^i$, $W^*(y)=V((4\pi y)^{-1})\exp(-h^2y^{-2}/8)$ all have Sobolev norms less than or equal to the corresponding Sobolev norm of $V$. This will be used below without further mention.

  The case $\theta=0$ is \cite[Thm 2]{LuoSarnak:2004a}. To handle the other cases we proceed as in the proof of \cite[Prop 2.1]{LuoSarnak:2003a}.  Doing this,  noticing in the proof that the Mellin transform satisfies
  \begin{equation}
      \int_{0}^\infty V\left(\frac{y^{-1}}{(k-1)^\theta}\right)y^{s}\frac{dy}{y}=(k-1)^{-s\theta}\int_{0}^\infty V\left(y^{-1}\right)y^{s}\frac{dy}{y},\end{equation}
 we find that
  \begin{align}\label{neverending} \mu_f(M_{(k-1)^\theta}P_{\W, h})
   = &\frac{2\pi^2}{(k-1)L(1,\sym^2 f)} \sum_{n\in \N} \lambda_f(n)\lambda_f(n+h) \\ &\times \W\left(\frac{(k-1)^{1-\theta}}{4\pi (n+h/2)}\right) \left(\frac{\sqrt{n(n+h)}}{n+h/2} \right)^{k-1}\\
& \quad\quad\quad\quad+O_{V,h}(k^{-1-\theta+\eps}),\end{align}
 where the implied constant is
 \begin{equation}\label{poly1}\ll_{\theta, \e}(1+h^B)A_{V}^B\norm{\W}_{B,\infty}\end{equation} for $B$ sufficiently large.
  This holds also for $h=0$.

  We now assume that $0<\theta<1/2$, and observe that in the above sum we may restrict to $n$ such that $(n+h/2)\asymp k^{1-\theta}$, which implies that $(k-1)/(n+h/2)^2=o(1)$ as $k\to \infty$. Therefore, we can employ the following Taylor expansion
  \begin{align}
    \left(\frac{\sqrt{n(n+h)}}{n+h/2} \right)^{k-1} &=\exp\left(\frac{k-1}{2}\log \left( 1-\frac{h^2}{(2n+h)^2} \right) \right)\\
    &=  \exp\left(-\frac{k-1}{2}\frac{h^2}{(2n+h)^2} +O\left( \frac{kh^4}{(2n+h)^4}\right) \right)\\
    &= \exp\left(-\frac{k-1}{2}\frac{h^2}{(2n+h)^2} \right)+O\left( \frac{kh^4}{(2n+h)^4}\right)\\
    &=  \sum_{i=0}^{N-1}\frac{\left(-\frac{h^2}{2}\frac{k-1}{(2n+h)^2}\right)^i}{i!}+O_N\left(\frac{(kh^2)^N}{(2n+h)^{2N}}+ \frac{kh^4}{(2n+h)^4}\right)\\
    &= \sum_{i=0}^{N-1}\frac{\left(-\frac{h^2}{8}\frac{k-1}{(n+h/2)^2}\right)^i}{i!}+O_N( h^{2N}k^{2N(\theta-1/2)}+h^{4}k^{4\theta-3}).
  \end{align}
  This gives us, using $L(1,\sym^2 f)\gg k^{-\e}$, see \cite[Eq. (2.1)]{LuoSarnak:2003a},
  \begin{align}
  \mu_f(M_{(k-1)^\theta}P_{\W, h})
  = &\frac{2\pi^2}{(k-1)L(1,\sym^2 f)} \sum_{i=0}^{N-1} \frac{\left(-\frac{h^2}{8}(k-1)^{2\theta-1}\right)^i}{i!} A^{W_i}_f((k-1)^{1-\theta},h)\\
  &\quad\quad + O_{V,h,N}((h^{2N}k^{2N(\theta-1/2)-\theta+\eps}+h^4k^{3\theta-3+\eps})+k^{-1-\theta+\eps}),\label{innerproduct-to-SCS}
  \end{align}
  where $W_i(y)= V\left(\frac{1}{4\pi y}\right)y^{-2i}$ for $y\in \R_+$, and the implied constant is of the form \eqref{poly1}. Since $\theta<1/2$, we can choose $N$ large enough such that the dominating error term in $k$  is $k^{-1-\theta+\eps}$.\\
 We now plug \eqref{innerproduct-to-SCS} into the expression we want to evaluate. The terms involving the products of error terms is easily seen to be $O_{V,h,N}(K^{-2\theta+\e})$.

 To bound the mixed terms we note that $(k-1)^{(2{\theta-1})i}$ is largest when $i=0$, so it suffices to observe that  \begin{align}
  K^{-1-\theta+\e}&\sum_{2|k}u\left(\frac{k-1}{K}\right) \frac{1}{k-1}\sum_{f\in H_k} \abs{A^{W_i}_f( (k-1)^{1-\theta},h)}\\
  &\ll_{V,h,N} K^{-1-\theta+\e} K^{1/2+\eps} \left( \sum_{k\geq 1,2|k} u\left(\frac{k-1}{K}\right) \sum_{f\in H_k} \frac{\abs{A^{W_i}_f( (k-1)^{1-\theta},h)}^2}{L(1, \sym^2 f)}  \right)^{1/2}\\
  &\ll_{V,h,N}   K^{1/2-3\theta/2+\eps},
  \end{align}
  where we have used the Cauchy--Schwarz inequality, the positivity of $L(1, \sym^2 f )$, and Theorem \ref{QVthm}. The implied constant is of the claimed form.
  This implies that
  \begin{align}
  \sum_{2|k} u&\left(\frac{k-1}{K}\right)\sum_{f\in H_k}L(1,\sym^2 f) \mu_f(M_{(k-1)^\theta}P_{\W_1, h_1})\overline{ \mu_f(M_{(k-1)^\theta}P_{\W_2, h_2})}\\
  &=\sum_{2|k} u\left(\frac{k-1}{K}\right) \frac{(2\pi^2)^2}{(k-1)^2} \sum_{0\leq i,j\leq N-1} \frac{h_1^{2i}h_2^{2j}}{i!j!}\left(-\frac{1}{8}(k-1)^{2\theta-1}
\right)^{i+j}\\
  & \quad \quad  \times \sum_{f\in H_k}\frac{A^{W_{1,i}}_f( (k-1)^{1-\theta},h_1)A^{W_{2,j}}_f((k-1)^{1-\theta},h_2)}{L(1, \sym^2 f)}\\
  & \quad \quad\quad+O_{V_1,V_2,h,N}( K^{1/2-3\theta/2+\eps}),
  \end{align}
with an allowed implied constant.  Now for each pair $i,j\in\{0,\ldots,N-1\}$, we apply Theorem \ref{QVthm} with smooth weights $W_{1,i}$ $W_{2,j}$ and weight function
  \begin{equation}u_{ij}(y)= u(y) y^{(2\theta-1)(i+j)-1}. \end{equation}
  This gives
  \begin{align}
  &\sum_{k\geq 1,2|k} u\left(\frac{k-1}{K}\right) \frac{(2\pi^2)^2}{(k-1)^2} \frac{h_1^{2i}h_2^{2j}}{i!j!}\left(-\frac{1}{8}(k-1)^{2\theta-1}\right)^{i+j}\\
  &\quad \quad\times \sum_{f\in H_k}\frac{A^{W_{1,i}}_f( (k-1)^{1-\theta},h_1)A^{W_{2,j}}_f((k-1)^{1-\theta},h_2)}{L(1, \sym^2 f)}\\
  &= 2\pi^2 h_1^{2i}h_2^{2j}\left(\frac{-1}{8}\right)^{i+j} K^{2(i+j)(\theta-1/2)-1}\sum_{k\geq 1,2|k} u_{ij}\left(\frac{k-1}{K}\right) \frac{2\pi^2}{(k-1)}\\
  &\quad \quad\times \sum_{f\in H_k}\frac{A^{W_{1,i}}_f( (k-1)^{1-\theta},h_1)A^{W_{2,j}}_f((k-1)^{1-\theta},h_2)}{L(1, \sym^2 f)}\\
  &= 2\pi^2 h_1^{2i}h_2^{2j}\left(\frac{-1}{8}\right)^{i+j}  \left(\int_0^\infty u_{ij}(y)y^{1-\theta}dy \right)\\%\tau_1((h_1,h_2))\\
  & \quad\cdot B_{h_1,h_2}(W_{1,i}, W_{2,j})K^{1-\theta-(i+j)(1-2\theta)}
  +O_{h_i,V_i,\theta, N, \e}\left(K^{-(i+j)(1-2\theta)+\eps} \right),
  \end{align}
  with an implied constant of the desired form.
  For $(i,j)\neq (0,0)$ we see that the contribution is bounded by $O(K^{\theta})$, and for $i=j=0$, we get the wanted main term. So in this case we have an error of order $O(K^{\max{(\theta, 1/2-3\theta/2)}})$, which translates to the claimed $\delta_\theta$.

  Now assume that $\theta=1/2$, which implies that $\frac{k-1}{(n+h/2)^2}\asymp 1$  for non-zero terms in the sum \eqref{neverending}. Again by a Taylor expansion, we see that
  \begin{equation}   \left(\frac{\sqrt{n(n+h)}}{n+h/2} \right)^{k-1}= \exp\left(-\frac{h^2}{8}\frac{k-1}{(n+h/2)^2}\right)+O\left( \frac{h^4}{k}\right), \end{equation}
  which is the source of the different main term in this case. We proceed as above to write
  \begin{align*}
  &\sum_{k\geq 1,2|k} u\left(\frac{k-1}{K}\right)\sum_{f\in H_k}L(1,\sym^2 f) \mu_f(M_{(k-1)^\theta}P_{\W_1, h_1})\overline{ \mu_f(M_{(k-1)^\theta}P_{\W_2, h_2})}\\
  &=\sum_{k\geq 1,2|k} u\left(\frac{k-1}{K}\right) \frac{(2\pi^2)^2}{(k-1)^2} \sum_{f\in H_k}\frac{A^{W_1^*}_f((k-1)^{1-\theta},h_1)A^{W_2^*}_f((k-1)^{1-\theta},h_2)}{L(1, \sym^2 f)}\\
  &\quad\quad\quad+O_{V_i,h_i}( K^{-1/4+\eps}),
  \end{align*}
  where $W_i^*(y)=\W_i\left(\frac{1}{4\pi y }\right)\exp\left(-\frac{h_i^2}{8}/y^2\right)$ for $i=1,2$, and where the implied constant is of the desired form. Again by an application of Theorem \ref{QVthm}, we get the desired main term with error-term $O_{V_i,h_i}(K^\eps)$ and an implied constant of the desired form.

  Finally when $\theta>1/2$, we see that for non-zero terms in the sum \eqref{neverending}  we have $\frac{k-1}{(n+h/2)^2}\gg k^{2\theta-1}$, which implies
  \begin{equation}   \left(\frac{\sqrt{n(n+h)}}{n+h/2} \right)^{k-1}\ll  \exp\left(- c k^{2\theta-1}\right), \end{equation}
  for some $c>0$ depending only on $V$,  and hence we get exponential decay of the sum in \eqref{neverending}. Therefore we can even get the desired bound without any averaging. By summing up we arrive at the error-term $O(\norm{V_1}_\infty \norm{V_2}_\infty K^{-2\theta+\eps})$.
  \end{proof}

\begin{rem} The above theorem also holds, with the same proof, when we allow $\W_i$ to have support in $\R_+$, if we interpret $M_{(k-1)^\theta}P_{\W,m}$ as the Poincar\'e series $P_{\W_{k,\theta}}$ related to $\W_{k,\theta}=\W(y/(k-1)^\theta)$.
\end{rem}
\begin{rem}\label{remark:hneq0}
We now give a quick sketch of what happens in the case when  $h_1=0$ and  $\int_0^\infty \W_1(y) y^{-2}dy=0$ (i.e. in the case where $P_{\W_1,0}$ is an incomplete Eisenstein series orthogonal to 1) and $h_2\neq 0$. The translation to a shifted convolution sum as in \eqref{neverending} is still valid.

To analyze the resulting shifted convolution sum we imitate the proof of Theorem \ref{QVthm}. In this case we use the Hecke relations \eqref{hecke-relations} to write \begin{equation}A_f^{W_1}(X,0)=\sum_{d\in\N}\sum_{r\in \N}\lambda_f(r^2)W_1\left(\frac{dr}{X}\right).\end{equation} Here  $W_1(y)=\W_1(1/(4\pi y))$ and $X=(k-1)^{1-\theta}$.   We deal with the off-diagonal terms as above and the diagonal term from the Petersson formula becomes
\begin{equation} \sum_{\substack{d_1\in \N,d_2|h_2\\ r_1,r_2\in\N}}\delta_{r_1^2=r_2(r_2+d_2)}W_1\left(\frac{X}{d_1r_1} \right)W_2\left(\frac{X}{\frac{h_2}{d_2}(r_2+d_2/2)} \right). \end{equation}
 Now we observe that for fixed $d_2$ the equation  $r_1^2=r_2(r_2+d_2)$ has only finitely many solutions $(r_1,r_2)$ and for any such solution, we have by Poisson summation
\begin{align*} \sum_{d_1\in \N } W_1\left(\frac{X}{d_1r_1} \right)&=\int_0^\infty W_1\left(\frac{X}{r_1y} \right)dy+O_{h_2,A}(X^{-A})\\
&=\frac{X}{4\pi r_1}\int_0^\infty V_1\left(y \right)\frac{dy}{y^2}+O_{h_2,A}(X^{-A})=O_{h_2,A}(X^{-A}).
\end{align*}
Therefore the conclusion of  Theorem \ref{thm:calculating-variances} holds in this case with $B_\theta(P_{V_1,0},P_{V_2,h_2})=0$.\\

Now if $h_i=0$ and $\int_0^\infty V_i(y) y^{-2}dy=0$,  since the factor $\left(\frac{\sqrt{n(n+h_i)}}{n+h_i/2}\right)^{k-1}=1$, we do not have to distinguish between various regimes of $\theta$. Using a similar analysis we find that
\begin{align}
\sum_{2\mid k} &u\left(\frac{k-1}{K}\right)
  \sum_{f\in H_k}L(1,\sym^2 f)  \mu_f(M_{(k-1)^\theta}P_{\W_1,0}) \overline{\mu_f(M_{(k-1)^\theta}P_{\W_2,0})}\\
\label{coffee-coffee}      &=2\pi^2 \sum_{2\vert k}\frac{u\left(\frac{k-1}{K}\right)}{k-1}\sum_{r,d_1,d_2\in \N}V_1\left(\frac{(k-1)^{1-\theta}}{4\pi rd_1}\right)\overline{V_2\left(\frac{(k-1)^{1-\theta}}{4\pi rd_2}\right)}\\&\quad\quad +O(\max{(K^{1/2-3\theta/2+\e},1)}).
           \end{align}

Analogous to \cite[p. 781]{LuoSarnak:2004a}, by using successive Euler--Maclaurin summation on the $d_i$ sums, see \cite[Eq. (4.20)]{IwaniecKowalski:2004a}, followed by Poisson summation on the $r$ sum and on the $k$ sum we have that  in this case Theorem \ref{thm:calculating-variances} holds with
\begin{equation}B_\theta(P_{\W_1,0},P_{\W_2,0})=\frac{\pi}{4}\int_0^\infty \int_0^\infty \int_0^\infty b_2(y_1)b_2(y_2) \tilde \W_1\left(\frac{t}{y_1}\right)\tilde V_2\left(\frac{t}{y_2}\right)  \frac{dy_1}{y_1^2}\frac{dy_2}{y_2^2} \frac{dt}{t^2},\end{equation}
with error  term $O(\max{K^{1/2-3\theta/2+\e},1})$, and the same type of implied constant.
Here $\tilde \W_i(y)=(\W_i^\prime (y)y^2)^\prime$ and $2b_2(y)=B_2(y-\lfloor y \rfloor)$, where $B_2(y)=y^2-y+{1}/{6}$ is the second order Bernoulli polynomial. Note that the $y_i$ integrals vanishes for $t$ sufficiently small so the $t$-integral converges (although not absolutely).

\end{rem}

\section{Extension of \texorpdfstring{$B_\theta(\cdot{,}\cdot)$}{Btheta} and quantum variance for more general observables}
Let \begin{equation}C_{0}^\infty (M,B):=
\left\{ \psi:M\to \C \text{ smooth} \left\vert
\begin{array}{l}
\scriptstyle{
\supp \psi\subset B}\vspace{-5pt}\\
\scriptstyle{\psi \text{ decays rapidly at $\infty$}}\vspace{-5pt}\\
\scriptstyle{\int_0^1\psi(z)dx=0 \textrm{ for $y$ large enough}}
\end{array}
\right.\right\},\end{equation}
where $B=\{x+iy\in M \mid y> 1\}\subset X$ is the standard horocyclic region. In this section we will extend the above variance results to the space \begin{equation}
    C_{0,0}^\infty (M,B)=\{\psi\in C_{0}^\infty (M,B): \inprod{\psi}{1}=0\}.
\end{equation}

For $\psi\in C_{0,0}^\infty (M,B)$ we let $V^{\psi}_m$ be its $m$th Fourier coefficient.
Note that, since $\psi$ is supported in $B$,  the coefficient $V^{\psi}_m(y)$ is supported in $y>1$ and we have  \begin{equation}\label{psi-FE}
\psi(z)=\sum_{m\in\Z} V^{\psi}_m(y) e(mx) =\sum_{m\in \Z} P_{V^{\psi}_m,m}(z),
\end{equation}
where  $V^{\psi}_0$ has compact support, and satisfies $\int_0^\infty\W_0^{\psi}(y)y^{-2}\,{dy}=0$.
Inspired by Theorem \ref{thm:calculating-variances} and Remark \ref{remark:hneq0}, we define, for $\psi_1, \psi_2\in C_{0,0}^\infty (M,B)$,
the Hermitian form \begin{align}
B_\theta(\psi_1,\psi_2)=\frac{\pi}{4}&
\sum_{m, n\neq 0}\tau_1((\abs{m},\abs{n}))\int_0^\infty V^{\psi_1}_m\left(\frac{y}{\abs{m}}\right)\overline{V^{\psi_2}_n\left(\frac{y}{\abs{n}}\right)}f_{\theta,m,n}(y)\frac{dy}{y^2}\\
\label{Btheta}&+\frac{\pi}{4}\int_0^\infty \int_0^\infty \int_0^\infty b_2(y_1)b_2(y_2) \widetilde{V^{\psi_1}_0}\left(\frac{t}{y_1}\right)\overline{\widetilde{V^{\psi_2}_0}\left(\frac{t}{y_2}
\right)}
\frac{dy_1}{y_1^2}\frac{dy_2}{y_2^2} \frac{dt}{t^2}.
\end{align}

Note that if $\psi_1,\psi_2$ consist of a single Fourier coefficient, and, if this coefficient is not just of rapid decay but of compact support, then \eqref{Btheta} agrees with the result of Theorem \ref{thm:calculating-variances} and Remark \ref{remark:hneq0}.
To see that $B_\theta(\psi_1,\psi_2)$ is well defined we argue as follows. By smoothness and rapid decay of $\psi$ and  using integration by parts, we see that
\begin{equation}V^{\psi}_m(y)\ll_{A,B, \psi} y^{-A}m^{-B},\end{equation}
for any $A,B\geq 0$. It follows that
\begin{equation}\int_0^\infty V^{\psi_1}_m\left(\frac{y}{\abs{m}}\right)\overline{V^{\psi_2}_n\left(\frac{y}{\abs{n}}\right)}\frac{dy}{y^2} \ll_A (\abs{mn})^{-A},\end{equation}
and so  the first sum in \eqref{Btheta}  converges absolutely. The second term in \eqref{Btheta}  is well-defined by the discussion in Remark \ref{remark:hneq0}.

We observe that, when restricted to incomplete Eisenstein series, the form $B_\theta(\cdot{,}\cdot)$ is independent of $0<\theta<1$, while for cuspidal test functions $B_\theta(\cdot{,}\cdot)$ exhibits a phase transition at $\theta=1/2$  as claimed in Theorem \ref{intro:QuantumVariance} (ii).

We can now show that the variance result of Theorem \ref{thm:calculating-variances} can be extended to the space $C_{0,0}
^\infty(M,B)$.

\begin{thm} \label{qvargeneraltest}  Let $u:\R_+\to \R_{\geq 0}$ be a smooth compactly supported weight function, and let $\psi\in C_{0,0}^\infty (M,B)$ and $0<\theta<1$. Then we have
\begin{align}\label{qvarianceM,B}
\sum_{2|k}u\left(\frac{k-1}{K} \right)\sum_{f\in H_k}L(1,\sym^2 f) \abs{\mu_f(M_{(k-1)^\theta}\psi)-\nu{(M_{(k-1)^\theta}{\psi})}}^2 \\
=  B_\theta(\psi,\psi)\left(\int_0^\infty u(y)y^{-\theta} dy\right)K^{1-\theta}+O_{\psi,\theta}(K^{1-\theta-\delta_\theta}),
\end{align}
for $\delta_\theta>0$ as in Theorem \ref{thm:calculating-variances}.
\end{thm}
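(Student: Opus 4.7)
The plan is to reduce the claim for a general test function $\psi\in C_{0,0}^\infty(M,B)$ to the Poincar\'e series case already handled in Theorem \ref{thm:calculating-variances} and Remark \ref{remark:hneq0}, using the Fourier expansion \eqref{psi-FE}. First, since $\psi$ is orthogonal to the constant function and the squeezing operator satisfies $\inprod{M_H^B\varphi}{1}=(\vol{B}H)^{-1}\inprod{\varphi}{1}$, we have $\nu(M_{(k-1)^\theta}\psi)=0$, so the left-hand side of \eqref{qvarianceM,B} reduces to the pure variance $\sum_{2\mid k}u\left(\frac{k-1}{K}\right)\sum_{f\in H_k}L(1,\sym^2 f)\abs{\mu_f(M_{(k-1)^\theta}\psi)}^2$. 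Expanding $\psi=\sum_{m\in\Z}P_{V^\psi_m,m}$ and using bilinearity of $\mu_f$ gives
\begin{equation}
\abs{\mu_f(M_{(k-1)^\theta}\psi)}^2=\sum_{m,n\in\Z}\mu_f(M_{(k-1)^\theta}P_{V^\psi_m,m})\overline{\mu_f(M_{(k-1)^\theta}P_{V^\psi_n,n})}.
\end{equation}

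Next, I would apply Theorem \ref{thm:calculating-variances} to each pair $(m,n)$ with $mn\neq 0$, and apply Remark \ref{remark:hneq0} to the pairs where at least one of $m,n$ vanishes. The theorem contributes a main term $B_\theta(P_{V^\psi_m,m},P_{V^\psi_n,n})\int u(y)y^{-\theta}\,dy\cdot K^{1-\theta}$, while Remark \ref{remark:hneq0} shows that the mixed cases with exactly one of $m,n$ zero contribute no main term and the $m=n=0$ case produces the Eisenstein piece built from the Bernoulli polynomials $b_2$. Summing all main terms and comparing with the definition \eqref{Btheta} yields the desired total main term $B_\theta(\psi,\psi)\int u(y)y^{-\theta}\,dy\cdot K^{1-\theta}$.

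Finally, the error term from the $(m,n)$-pair is bounded by a constant depending on $\theta,\e$ times $(1+\max(\abs{m},\abs{n}))^C(A_{V^\psi_m}A_{V^\psi_n})^C\norm{V^\psi_m}_{C,\infty}\norm{V^\psi_n}_{C,\infty}K^{1-\theta-\delta_\theta+\e}$. Repeated integration by parts in $V^\psi_m(y)=\int_0^1\psi(x+iy)e(-mx)\,dx$ shows that $\norm{V^\psi_m}_{C,\infty}$ decays faster than any polynomial in $\abs{m}$, which defeats the polynomial factor $(1+\max(\abs{m},\abs{n}))^C$ and makes the doubly infinite sum of error terms converge, giving the overall error $O_{\psi,\theta}(K^{1-\theta-\delta_\theta+\e})$. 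The main technical obstacle is that the Fourier coefficients $V^\psi_m$ are only rapidly decaying as $y\to\infty$ rather than compactly supported in $y$, so Theorem \ref{thm:calculating-variances} cannot be applied verbatim; I would handle this either by invoking the remark immediately after Theorem \ref{thm:calculating-variances} (which permits support in $\R_+$) or by smoothly truncating $V^\psi_m$ at $y=K^{\e'}$ for small $\e'>0$ and absorbing the tail into the error using the rapid decay of $\psi$ at the cusp.
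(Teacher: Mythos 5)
Your reduction is the same as the paper's: use $\inprod{\psi}{1}=0$ to drop $\nu(M_{(k-1)^\theta}\psi)$, expand $\psi=\sum_m P_{V^\psi_m,m}$, apply Theorem \ref{thm:calculating-variances} to the pairs with $m,n\neq 0$ and Remark \ref{remark:hneq0} to the pairs involving the zero mode (whose coefficient is automatically compactly supported with vanishing mean, by the definition of $C_{0,0}^\infty(M,B)$), reassemble the main terms into \eqref{Btheta}, and beat the polynomial factor $(1+\norm{h}_\infty)^C$ in the error by the rapid decay of $V^\psi_m$ in $m$. The only genuine divergence is how you force the coefficients $V^\psi_m$, which merely decay rapidly in $y$, into the compact-support hypothesis of Theorem \ref{thm:calculating-variances}. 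Note that your first suggested fix does not do this: the remark following Theorem \ref{thm:calculating-variances} relaxes the requirement that the support lie inside $(1,\infty)$ (so that the squeezed Poincar\'e series must be reinterpreted), not the compactness of the support, and the error bound still carries the factor $(A_{W_1}A_{W_2})^C$, which is meaningless for a function supported out to $y=\infty$. Your second fix, a smooth truncation at height $K^{\e'}$, is viable: each truncated piece has $A_W\asymp K^{\e'}$, so each application of Theorem \ref{thm:calculating-variances} loses $K^{2C\e'}$, which is harmless once $\e'$ is chosen small in terms of $C(\theta)$ and $\delta_\theta$, while the tail contributes $O_A(K^{-A})$ by bounding $\abs{\mu_f}$ of the tail through its sup-norm and handling cross terms by Cauchy--Schwarz; these estimates should be written out. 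The paper instead uses a $K$-independent device: a triadic partition of unity $u_l$ supported in $(3^l,2\cdot 3^{l+1})$, writing $V^\psi_m=\sum_l u_l V^\psi_m$, where integration by parts gives $\norm{u_l V^\psi_m}_{C,\infty}\ll_{C,\psi}3^{-Cl}\abs{m}^{-C}$ uniformly in $l$, so the geometric decay in $l$ defeats $(A_W)^C\asymp 3^{Cl}$ and the errors sum over $l_1,l_2,m_1,m_2$ directly, with the main terms reassembling into $B_\theta(\psi,\psi)$ exactly as in your sketch; this buys a cleaner bookkeeping and no $K^{2C\e'}$ loss, whereas your truncation is shorter to state but requires the extra tail and cross-term estimates and a final choice of $\e'$.
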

\begin{proof}
Consider a partition of unity
\begin{equation} \sum_{l\geq 0} u_l(y)=1_{\geq 1}(y)=\begin{cases}1,& y>1\\ 0, & y<1  \end{cases}, \end{equation}
where $u_l:\R_+\to [0,1]$ with $\supp u_l\subset (3^l,2\cdot 3^{l+1})$, $u_l$ smooth for $l>0$ and $u_0$ smooth on $(1,\infty)$ and $u_l^{(n)}(y)\ll_n y^{a_n}$ for some $a_n>0$ independently of $l$.
Multiplying this partition of unity on $\psi$ as in \eqref{psi-FE} we find
\begin{equation}\psi(z)=\W^\psi_0(y)+\sum_{l\geq 0,m\neq 0} \W^\psi_{l,m}(y)e(mx), \end{equation}
where $\W^\psi_{l,m}(y)=u_l(y)\W^\psi_m(y)$ and $\W_0^\psi(y)$ are smooth with compact support. We have
\begin{equation}\label{cool-bound} {\W^\psi_{l,m}}^{(n)}(y)\ll_{C} y^{-C}\abs{m}^{-C}, \end{equation}
for any $C>0$  and independent of $l$.  To see this  we note that by the definition and partial integration
\begin{equation}{\W^{\psi}_{l,m}}^{(n)}(y)=\frac{1}{(2\pi i m)^C}\sum_{j=0}^n \binom{n}{j} u_l^{(n-j)}(y)\int_0^1 \left(\frac{\partial^{C+j}}{\partial x^C \partial y^{j}}\psi\right)(z) e(-mx)dx.\end{equation}
Now by using the rapid decay of $\psi$ and the bound of the derivatives of $u_l$, we arrive at \eqref{cool-bound}. This implies, in particular, that for every $C\geq 0$ we have $\norm{ \W^\psi_{l,m}}_{C, \infty}\ll_{C, \psi}{3^{-Cl}\cdot\abs{m}^{-C}}$.

This implies, using Theorem \ref{thm:calculating-variances}, that for $m_1,m_2 $ and $l_1,l_2\geq 0$
\begin{align}
 \sum_{2|k}u\left(\frac{k-1}{K} \right)\sum_{f\in H_k} & L(1,\sym^2 f) \mu_f(M_{(k-1)^\theta}P_{\W_{l_1,m_1},m_1})\mu_f(M_{(k-1)^\theta}P_{\W_{l_2,m_2},m_2})\\
 &=B_\theta(P_{\W_{l_1,m_1},m_1},P_{\W_{l_2,m_2},m_2})\left(\int_0^\infty u(y)y^{-\theta} dy\right)K^{1-\theta}\\ &\quad\quad\quad\quad+O_{\psi,\theta}\left(\frac{K^{1-\theta-\delta_\theta}}{3^{l_1+l_2}((1+\abs{m_1})(1+\abs{m_2}))^{2}}\right).
\end{align}
Therefore by summing up all the contributions we get
\begin{align}\sum_{k,2|k}u&\left(\frac{k-1}{K} \right)\sum_{f\in H_k}L(1,\sym^2 f) |\mu    _f(M_{(k-1)^\theta}\psi)|^2 \\
&=  \left(\sum_{m_1,m_2,l_1,l_2}B_\theta(P_{h_{l_1,m_1},m_1},P_{h_{l_2,m_2},m_2})\right)\left(\int_0^\infty u(y)y^{-\theta} dy\right)K^{1-\theta}\\
&\quad\quad+O_{\psi,\theta}\left(K^{1-\theta-\delta_\theta}\left(\sum_{\substack{l_1,l_2>0\\m_1,m_2}}\frac{3^{-l_1-l_2}}{((1+\abs{m_1})(1+\abs{m_2}))^{2}}\right)\right)\\
&=B_\theta(\psi,\psi)\left(\int_0^\infty u(y)y^{-\theta} dy\right)K^{1-\theta}+O_{\psi,\theta}(K^{1-\theta-\delta_\theta}),\end{align}
which finishes the proof.
\end{proof}

\section{Small scale quantum ergodicity around infinity}

In this section we show that if we average  over $f\in H_k$ and over the weight $k$ quantum ergodicity holds for appropriately chosen sets shrinking towards the cusp all the way down to the Planck scale.

\begin{thm}\label{ergodicity}Let $0<\theta<1$ and $\psi\in C_{0}^\infty(M,B)$. Then
 \begin{align}
    \sum_{2\mid k} u\left(\frac{k-1}{K}\right) \sum_{f\in H_k}L(1,\sym^2 f) &\abs{\mu_f(M_{(k-1)^\theta}\psi)-\nu(M_{(k-1)^\theta}\psi)}^2\\&= O_{\psi, u} (K^{\max{(2-2\theta-1/5,1-\theta)}}).
\end{align}

  \end{thm}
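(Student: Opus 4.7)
The plan is to reduce Theorem~\ref{ergodicity} to Theorem~\ref{qvargeneraltest} by peeling off a fixed bump function that carries the mean of $\psi$. Fix once and for all a smooth $\chi_0: \R_+ \to \R_{\geq 0}$ with $\supp\chi_0 \subset (1,\infty)$ and $\int_0^\infty \chi_0(y) y^{-2}\,dy \neq 0$, regarded as a function on $M$ depending only on $y$; in particular $\nu(\chi_0)\neq 0$. Given $\psi \in C_0^\infty(M,B)$, set $\alpha = \nu(\psi)/\nu(\chi_0)$ and $\psi_1 := \psi - \alpha \chi_0 \in C_{0,0}^\infty(M,B)$. Since $\nu(M_{(k-1)^\theta} \psi_1) = 0$, one has
\begin{equation}
\mu_f(M_{(k-1)^\theta} \psi) - \nu(M_{(k-1)^\theta} \psi) = \mu_f(M_{(k-1)^\theta} \psi_1) + \alpha\bigl(\mu_f(M_{(k-1)^\theta} \chi_0) - \nu(M_{(k-1)^\theta} \chi_0)\bigr),
\end{equation}
so that $|a+b|^2 \leq 2|a|^2 + 2|b|^2$ splits the quantum variance sum into a cuspidal contribution and a bump contribution.

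The cuspidal contribution is $O_{\psi,u}(K^{1-\theta})$ directly from Theorem~\ref{qvargeneraltest}. For the bump contribution, I would revisit the $h=0$ case from the proof of Theorem~\ref{thm:calculating-variances}: with $X=(k-1)^{1-\theta}$ and a standard weight $W_0$ built from $\chi_0$, one has
\begin{equation}
\mu_f(M_{(k-1)^\theta} \chi_0) = \frac{2\pi^2}{(k-1)L(1,\sym^2 f)} \sum_n \lambda_f(n)^2 W_0(n/X) + O_{\chi_0}(k^{-1-\theta+\e}).
\end{equation}
Mellin inverting $W_0$ and invoking the Rankin--Selberg identity $\sum_n \lambda_f(n)^2 n^{-s} = \zeta(s)L(s,\sym^2 f)/\zeta(2s)$ turns the $n$-sum into a contour integral; shifting past the simple pole at $s=1$ produces (after reassembling the Rankin--Selberg constants exactly as in the fundamental-domain computation for $\nu$) the quantity $\nu(M_{(k-1)^\theta}\chi_0)$, leaving a contour integral on $\Re s = 1/2$ whose size is controlled by $L(1/2+it,\sym^2 f)$.

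I would then bound the remaining integral using the convexity estimate $|L(1/2+it,\sym^2 f)| \ll_\e k^{3/4+\e}(1+|t|)^{3/4+\e}$ in the $k$-aspect, using the rapid decay of $\tilde W_0$ to localise the $t$-integral, and sum over $f \in H_k$ via bounds of the form $\sum_f L(1,\sym^2 f)^{-1} \ll k^{1+\e}$ and over $k \sim K$ against $u((k-1)/K)$. The hard part will be the bookkeeping to extract the precise polynomial saving $K^{-1/5}$ from the convexity input over the trivial bound $K^{2-2\theta}$; as noted in the introduction, the exponent $1/5$ is essentially forced by convexity and would be improved by any subconvex bound in the $k$-aspect for $L(s,\sym^2 f)$. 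Combining the cuspidal and bump estimates yields the stated bound $O_{\psi,u}(K^{\max(1-\theta,\,2-2\theta-1/5)})$.
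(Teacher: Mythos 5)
Your reduction step is sound and essentially matches the paper's: split $\psi$ into a mean-zero part, which Theorem \ref{qvargeneraltest} handles with error $O_{\psi,u}(K^{1-\theta})$, plus a fixed incomplete Eisenstein series carrying the mean. The gap is in your treatment of the bump term. You propose to bound $\mu_f(M_{(k-1)^\theta}\chi_0)-\nu(M_{(k-1)^\theta}\chi_0)$ for each \emph{individual} $f$ by Mellin inversion, Rankin--Selberg, a contour shift to $\Re s=1/2$, and convexity for $L(1/2+it,\sym^2 f)$. Quantitatively, with $X=(k-1)^{1-\theta}$ and the correct convexity exponent $L(1/2+it,\sym^2 f)\ll_{t,\e} k^{1/2+\e}$ (the analytic conductor is $\asymp k^{2}$, so your $k^{3/4+\e}$ is not the convexity bound, and using it would only make matters worse), this gives
\begin{equation}
\mu_f(M_{(k-1)^\theta}\chi_0)-\nu(M_{(k-1)^\theta}\chi_0)\ \ll\ \frac{X^{1/2}\,k^{1/2+\e}}{(k-1)\,L(1,\sym^2 f)}\ =\ \frac{k^{-\theta/2+\e}}{L(1,\sym^2 f)},
\end{equation}
hence $L(1,\sym^2 f)\abs{\mu_f-\nu}^2\ll k^{-\theta+\e}/L(1,\sym^2 f)$, and summing over $f\in H_k$ (using $\sum_f L(1,\sym^2 f)^{-1}\asymp k$) and over $k\asymp K$ yields only $O(K^{2-\theta+\e})$. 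This exceeds not just the target $K^{\max(2-2\theta-1/5,\,1-\theta)}$ but even the trivial bound $K^{2-2\theta+\e}$ coming from $\abs{\nu}\asymp k^{-\theta}$: pointwise convexity does not even show $\mu_f(M\chi_0)\sim\nu(M\chi_0)$, since it bounds the difference by $k^{-\theta/2}$ while the main term has size $k^{-\theta}$. To salvage your route you would need roughly $L(1/2,\sym^2 f)\ll k^{2/5-\theta/2}$ uniformly (i.e., Lindel\"of once $\theta\geq 4/5$), or a Lindel\"of-on-average second moment of $L(1/2,\sym^2 f)$ over $H_k$; neither is available, and the paper's closing remark about second moments is precisely an acknowledgment of this obstruction.

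The paper's proof takes a genuinely different route for the Eisenstein part: it opens the square and computes the three averages $\sum_{k,f}L(1,\sym^2 f)\abs{\mu_f}^2$, $\sum_{k,f}L(1,\sym^2 f)\mu_f\overline{\nu}$, and $\abs{\nu}^2\sum_{k,f}L(1,\sym^2 f)$ asymptotically via the Petersson formula, where the harmonic weights cancel and the off-diagonal is controlled by Bessel decay and stationary phase; the three main terms then cancel exactly. Convexity enters only when approximating the \emph{weight} $L(1,\sym^2 f)$ by a Dirichlet polynomial of length $T$ in the zeroth and first moments (equations \eqref{rock} and \eqref{anotherone}); balancing the convexity error $K^{5/2+\e}T^{-1/2}$ against the off-diagonal error $T^{2}K^{-1+\e}$ forces $T=K^{7/5}$, which is where the saving $1/5$ actually comes from. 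That averaging mechanism is absent from your plan, so the bump contribution as you propose to estimate it does not close.
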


\begin{proof}

Note that $\psi\in C_{0}^\infty(M,B)$ can be written as $\psi=\psi_0+\psi_1$, where $\psi_1\in C_{0,0}^\infty(M,B)$ and $\psi_0=P_{V,0}$ is an incomplete Eisenstein series with $V$ supported in $(1,\infty)$. Since trivially
\begin{equation}
    \abs{\mu_f(M_{(k-1)^\theta}\psi)-\nu(M_{(k-1)^\theta}\psi)}^2\leq 2\sum_{i=1,2}\abs{\mu_f(M_{(k-1)^\theta}\psi_i)-\nu(M_{(k-1)^\theta}\psi_i)}^2,
\end{equation}
we may use Theorem \ref{qvargeneraltest} to see that we only need to prove Theorem \ref{ergodicity} in the case where $\psi=P_{V,0}$, which we assume for the rest of the proof. In order to do so, we open up the square and compute asymptotics with error terms for each of the averages over each of the terms $\abs{\mu_f(M_{(k-1)^\theta}\psi)}^2$,  $\abs{\nu(M_{(k-1)^\theta}\psi)}^2$, ${\mu_f(M_{(k-1)^\theta}\psi)\overline{\nu(M_{(k-1)^\theta}\psi)}}$ and its conjugate. Since $\nu(M_{(k-1)^\theta}\psi)=(k-1)^{-\theta}\nu(\psi)$ we see that this essentially corresponds to computing the second, zero-th, and first moment of $\mu_f(M_{(k-1)^\theta}\psi)$.

  We start by showing that
\begin{align}\label{paper}
    \sum_{2\mid k} u&\left(\frac{k-1}{K}\right) \sum_{f\in H_k}L(1,\sym^2 f) \abs{\mu_f(M_{(k-1)^\theta}\psi)}^2\\
    &=\abs{\nu(\psi)}^2\frac{\zeta(2)^2}{12}\int_0^\infty u(y)y^{1-2\theta}dy\frac{K^{2-2\theta}}{2}
\label{expansion-1}    +O_{\psi,u} (K^{1-\theta}).
\end{align}
To prove this we start as in Remark \ref{remark:hneq0} and arrive at \eqref{coffee-coffee}. We then evaluate the sum over  $d=d_i$ using  the second order Euler--Maclaurin formula and find that
  we have for any $X>0$
\begin{align}\label{EulerMac}\sum_{d} V\left(\frac{X}{ rd }\right)&= \int_0^\infty V\left(\frac{X}{ ry }\right)dy -\int_0^\infty b_2(y)\tilde V\left(\frac{X}{ r y }\right)\frac{dy}{y^2}\\
&= \frac{X}{r}\int_0^\infty V(y)\frac{dy}{y^2} -\int_0^\infty b_2(y)\tilde V\left(\frac{X}{ r y }\right)\frac{dy}{y^2},\end{align}
  where $2b_2(y)=B_2(y-\lfloor y \rfloor)$ and $B_2(y)=y^2-y+{1}/{6}$ is the second Bernoulli polynomial and $\tilde V(y)=(V^\prime (y)y^2)^\prime $. Here we have used that  $\frac{\partial^2}{\partial y^2} V\left(\frac{X}{ ry }\right)= \tilde V\left(\frac{X}{ ry }\right) y^{-2}$.
  We know by the assumptions on $V$ that the above defines a smooth function in $r$ and that $\sum_{d} V\left(\frac{X}{ dr }\right)$ vanishes for $r>AX$.

  We can now evaluate \begin{equation}
      \sum_{r,d_1,d_2\in\N}V\left(\frac{X}{ rd_1 }\right)\overline{V\left(\frac{X}{ rd_2 }\right)}
  \end{equation}
  by inserting \eqref{EulerMac} and evaluating the four terms coming from opening the square.  The contribution coming from the absolute square of the first term on the right of \eqref{EulerMac} equals
  \begin{align*}
 X^2\abs{\int_0^\infty V(y)\frac{dy}{y^2}}^2\sum_{1\leq r \leq AX} \frac{1}{r^2}
 =X^2\abs{\int_0^\infty V(y)\frac{dy}{y^2}}^2 \zeta(2)+O(X).
  \end{align*}
A change of variables combined with the fact that $b_2(v)$ is uniformly bounded shows that $\int_0^\infty b_2(y)\tilde V\left(\frac{X}{ r y }\right)\frac{dy}{y^2}\ll_V r/X $. This implies that the remaining contributions are $O(X)$. Plugging these estimates back in \eqref{coffee-coffee} with $X=(k-1)^{1-\theta}/4\pi$ and using Poisson summation in the $k$ variable. we complete the proof of \eqref{expansion-1}.

 We next show that
  \begin{align}\label{rock}
  \sum_{2\vert k} u\left(\frac{k-1}{K}\right)\sum_{f\in H_k}L(1, \sym^2 f)=\frac{\zeta(2)^2
  }{12}\frac{K^2}{2}\int_0^\infty u(y)ydy+O(K^{2-\frac{1}{5}+\e}).
  \end{align}
 To approximate $L(1, \sym^2 f)$ we use $e^{-x}=\frac{1}{2\pi i}\int_{(\sigma)}\Gamma(s)x^{-s}ds$ to see that
  \begin{equation}
      \sum_{n=1}^\infty\frac{\lambda_f(n^2)}{n}e^{-n/T}=\frac{1}{2\pi i }\int_{(2)}\Gamma(s)\frac{L(s+1, \sym^2 f)}{\zeta(2(s+1))}T^sds.
  \end{equation}
 Here $T\geq 1$ is a parameter which will be chosen later. For now we assume that $T=K^a$ with $1<a<2$. Moving the line of integration to $\sigma=-1/2$ we pick up a pole of the Gamma function at $s=0$ and we find that
   \begin{equation}\label{approxim-symm2-at-1}
      \sum_{n=1}^\infty\frac{\lambda_f(n^2)}{n}e^{-n/T}=\frac{L(1, \sym^2 f)}{\zeta(2)}+I_f(T),
  \end{equation}
  where $I_f(T)=\frac{1}{2\pi i }\int_{(-1/2)}\Gamma(s)\frac{L(s+1, \sym^2 f)}{\zeta(2(s+1))}T^sds$. Using any bound of the form
\begin{equation}L(s,\sym^2 f)\ll_A (1+\abs{s})^A(k^{2})^{1/4-\rho}\end{equation}
for $\Re(s)=1/2$ we see, that $I_f(T)\ll_A T^{-1/2}k^{1/2-2\rho+\e}$. In fact the convexity estimate $\rho=0$ will suffice for what we need. We have
\begin{equation}
    \sum_{n=1}^\infty\frac{\lambda_f(n^2)}{n}e^{-n/T}=\sum_{n\leq T^{1+\epsilon}}\frac{\lambda_f(n^2)}{n}e^{-n/T}+O_A(K^{-A}),
\end{equation} for any $A>0$. We observe also that since $\lambda_f(n^2)\ll n^\e$ we have $\sum_{n\in \N}\frac{\lambda_f(n^2)}{n}e^{-n/T}\ll T^\e.$
Using these observations we see that
\begin{align}
    \sum_{2\vert k} & u\left(\frac{k-1}{K}\right)\sum_{f\in H_k}L(1, \sym^2f)= \\% \sum_{2\vert k} u(\frac{k-1}{K})\sum_{f\in H_k}\frac{L(1, \sym^2f)^2}{L(1,\sym^2f)}\\
\label{freedom}&=\zeta(2)^2\sum_{2\vert k} u\left(\frac{k-1}{K}\right)\sum_{n_1, n_2\leq T^{1+\e}}\frac{e^{-(n_1+n_2)/T}}{n_1n_2}\sum_{f\in H_k}\frac{\lambda_f(n_1^2)\lambda_f(n_2^2)}{L(1, \sym^2 f)}\\
&\quad\quad + O\left(\sum_{k}u\left(\frac{k-1}{K}\right)\sum_{f\in H_k}K^\epsilon (\abs{I_f(T)}+\abs{I_f(T)}^2) +K^{-A}\right).
\end{align}
Using the convexity bound ($\rho=0$) for  $I_f(T)$ we see that the error is $O\left(K^{2+\e} \left(\left(\frac{K}{T}\right)^{1/2}+ \frac{K}{T}\right)\right)$.
Up to this error term the sum we want to estimate therefore equals
\begin{equation}
\frac{\zeta(2)^2K}{2\pi^2}\sum_{2\vert k} \tilde u\left(\frac{k-1}{K}\right)\sum_{n_1, n_2\leq T^{1+\e}}\frac{e^{-(n_1+n_2)/T}}{n_1n_2}\frac{2\pi^2}{(k-1)}\sum_{f\in H_k}\frac{\lambda_f(n_1^2)\lambda_f(n_2^2)}{L(1, \sym^2 f)},
\end{equation}
where  $\tilde u(y)=u(y)y$. We now use the Petersson  formula \eqref{Petersson} on the last sum. The diagonal term gives the claimed main term
\begin{align}
    \frac{\zeta(2)^2K}{2\pi^2}\sum_{2\vert k} \tilde u\left(\frac{k-1}{K}\right)\sum_{n_1 \leq T^{1+\e}}\frac{e^{-2n_1/T}}{n_1^2}=\frac{\zeta(2)^3}{2\pi^2}\frac{K^2}{2}\int_0^\infty u(y)ydy +O({K^2}/{T}).
\end{align}
We also need to bound the non-diagonal contribution which is done as in the proof of Theorem \ref{QVthm}. This consists of a $k$ sum with $k$ supported around $K$, sums over $n_1, n_2\leq T^{1+\e}$, and a $c$-sum. The $c$-sum can be truncated at $c\leq M$ at the expense of an error which is big $O$ of
\begin{align}
    K\sum_{k\asymp K}\sum_{n_1,n_2\leq T^{1+\e}}\frac{1}{n_1n_2}\sum_{c>M}\left(\frac{e\Delta}{2kc}\right)^{k-1}\ll K\sum_{k\asymp K}T^{\epsilon}\left(\frac{e4\pi T^{2+2\e} }{2kM}\right)^{k-1} \frac{M}{K},
\end{align}
where $\Delta=4\pi n_1n_2\leq 4\pi T^{2+2\e}$ and we have used \eqref{good-bound-Bessel} on the Bessel function. If we choose $M=C T^{2+2\e}K^{-1+\epsilon}$ for a suitably big constant $C$ the parenthesis is $\ll K^{-\e(k-1)}$, which decays exponentially so this contribution is $O_A(K^{-A})$  for every positive $A$.

By using \eqref{first}, as in the proof of Theorem \ref{QVthm}, we see that it suffices to bound
\begin{equation}
    K\sum_{n_1,n_2\leq T^{1+\e}}\frac{1}{n_1n_2}\sum_{c\leq M}\abs{\int_{-\infty}^\infty \hat{g}(t) \sin \left( \frac{\Delta}{c}  \cos(2\pi t)\right)dt}
\end{equation}
with $g(y)=\tilde u(y/K)$. Here it is clear that $g$ is supported in $y\asymp K$ and $g^{(m)}(y)\ll K^{-m}$ and we conclude as in \eqref{ref:zombie} that
\begin{equation}
\int_{-\infty}^\infty\abs{\hat g(t)t^m}dt\ll K^{-m}.
\end{equation}
We use \eqref{taylorsin} with $N=1$ and we estimate the contribution from the error terms by
\begin{equation}
    K\sum_{n_1,n_2\leq T^{1+\e}}\frac{1}{n_1n_2}\sum_{c\leq M}\int_{-\infty}^\infty \abs{\hat{g}(t)} \left(\frac{\Delta}{c}\right)^\alpha \abs{t}^\beta dt\ll K^{1-\beta} \left(\sum_{n_1\leq T^{1+\e}}n^{\alpha-1}\right)^2 \sum_{c\leq M}c^{-\alpha}.
\end{equation}
For the four contributions $(\alpha, \beta)=(2,8)
, (4, 16), (0,2), (0,4)$ this gives an error term of $(K((T/K^2)^4+(T/K^2)^8)+T^2/K^2+T^2/K^{4})K^\e$, which are all less that $(K+T^2/K^2)K^\e$.
To bound the contribution involving $e(\frac{\Delta}{c},t)$ (recall definition \eqref{ref:star}) we see as in \eqref{ref:starstar} that
\begin{equation}\label{uno}
\frac{d^n}{dy^n}\left(g^{(m)}\left(\sqrt{\frac{2\Delta}{c}y}\right) y^{-1/2}\right)\ll_{u ,n,m} {K^{-m} y^{-1/2-n}},
\end{equation}
so again we find
\begin{equation}\label{duo}
     \int_{-\infty}^\infty\hat g(t)e(\frac{\Delta}{c},t)dt \ll_u
     \frac{K^{-3}\Delta^{1/2}}{c^{1/2}}+\frac{K^{1-2n}\Delta^{n-1/2}}{c^{n-1/2}}.
 \end{equation}
 It turns out to be convenient to interpolate the estimates for  $n=1$ and $n=2$ (using that $\min(a,b)\leq a^\lambda b^{1-\lambda}$ for $a,b\geq 0$, $0\leq \lambda\leq 1$) and use $n=3/2$ such that the last contribution is
\begin{align}
    K\sum_{n_1,n_2\leq T^{1+\e}}\frac{1}{n_1n_2}\sum_{c\leq M}\int_{-\infty}^\infty\hat g(t)e(\frac{\Delta}{c},t)dt\ll T^{2}K^{-5/2+\e}+K^{-1+\e}T^2.
\end{align}
The total error therefore become  $\ll K^{2+\e} \left(\frac{K}{T}\right)^{1/2} + K^{1+\e}  + T^{2}K^{-1+\e}$, as all other contributions are smaller. Choosing $T=K^{7/5}$ we complete the proof of  \eqref{rock}.

Lastly we use a similar strategy to prove that
  \begin{align}\label{anotherone}
  \sum_{2\vert k} &u(\frac{k-1}{K})\sum_{f\in H_k}L(1, \sym^2 f)\mu_f(M_{(k-1)^\theta}P_{V,0})\\&=\nu(P_{V,0})\int_0^\infty u(y)y^{1-\theta}dy\frac{\zeta(2)^2
  }{12}\frac{K^{2-\theta}}{2}+O(K^{2-\theta -(1/4+3\theta/8)+\e}+K^{1+\e}).
  \end{align}
   We use \eqref{approxim-symm2-at-1} to approximate $L(1, \sym^2 f)$ by  $\zeta(2)\sum_{n\leq T^{1+\e}}\frac{\lambda_f(n^2)}{n}e^{-n/T}$  at the cost of an error satisfying  $\ll K^{2-\theta+\e}\frac{K^{1/2-2\rho}}{T^{1/2}}$. We then use \eqref{neverending} and  the Hecke relations \eqref{hecke-relations} to arrive at
  \begin{equation}\zeta(2)\sum_{2\vert k}\!\!u\left(\frac{k-1}{K}\right)\!\!\!\!\sum_{\substack{n_1\leq T^{1+\e}\\n_2\in \N\\d\vert n_2}}\!\!\!\!\frac{e^{-n_1/T}}{n_1}V\left(\frac{(k-1)^{1-\theta}}{4\pi n_2}\right)\frac{2\pi^2}{(k-1)}\sum_{f\in H_k}\frac{\lambda_f(n_1^2)\lambda_f(d^2)}{L(1, \sym^2 f)},
 \end{equation}
  at the expense of an additional error which is $\ll K^{1-\theta+\e}.$ We then use the Petersson formula \eqref{Petersson}. The diagonal gives
  \begin{equation}
  \zeta(2)\sum_{2\vert k}u\left(\frac{k-1}{K}\right)\sum_{n_1\leq T^{1+\e}}\frac{e^{-n_1/T}}{n_1}\sum_{r=1}^\infty V\left(\frac{(k-1)^{1-\theta}}{4\pi rn_1}\right),
  \end{equation}
  which, after using Poisson summation in the $r$ variable, a change of variables, and then Poisson summation again in the $k$ variable,  gives the claimed main term up to an error which is $\ll K^2/T+K^{1+\e}$.

  The off-diagonal is handled as before: We truncate the $c$-sum at $M=CK^{-\theta+\e} T^{1+\e}$ with $C$ sufficiently large at the expense  $\ll K^{-A}$. We then use \eqref{first} with \begin{equation}g(y)=u(y/K)v\left(\frac{y^{1-\theta}}{4\pi n_2}\right)\end{equation} and find that in the support of the sums $g^{(m)}(y)\ll K^{-m}$, which allows us to bound the error coming from the approximation of $\sin(\frac{\Delta}{c}\cos(2\pi t))$ with $N=1$, $\Delta=4\pi n_1n_2$ as  \begin{equation}\ll
      T^{2}K^{(1-\theta)3-8+\e}+
      T^{4}K^{(1-\theta)5-16+\e}+
      TK^{-1-2\theta+\e}+
      TK^{-3-2\theta+\e}.
  \end{equation}
  We also find, using Fa\`a di Bruno's formula as before, that analogues of \eqref{uno} and \eqref{duo} hold for this $g$. Using \eqref{duo} with $n=2$ we get the final error contribution to be bounded by $TK^{-3/2-2\theta+\e}+ T^{3/2}K^{-1/2-5\theta/2}$. Balancing $T^{3/2}K^{-1/2-5\theta/2}=K^{2-\theta}\frac{K^{1/2}}{T^{1/2}}$ gives $T=K^{3/2-3\theta/4}$. This proves \eqref{anotherone} as with this choice of $T$ all error contributions are less than the claimed one.

 We can now finish the proof: We open up the square of the expression on the right-hand side of the theorem and use the expressions in \eqref{paper}, \eqref{rock}, and \eqref{anotherone}. The main terms cancel and we are left with the claimed error term.

 \end{proof}

\begin{rem}
It is obvious from the above proof that a subconvexity result in the $k$-aspect for $L(s, \sym^2 f)$ would give an improvement of the exponent. In fact a non-trivial bound on the second moment of $L(s, \sym^2 f)$ in the weight aspect would suffice. For $s=1/2$ such estimates has been proved in \cite{Khan:2010a}.
\end{rem}
Theorem \ref{ergodicity} shows that if $0<\theta<1$ then mostly (i.e. in a full-density set of $f\in H_k$ ) we have
\begin{equation}
    \mu_f(M_{(k-1)^\theta}\psi)=\nu(M_{(k-1)^\theta}\psi)+o(k^{-\theta}).
\end{equation}
If we go below the Planck scale, i.e. if we let $\theta\geq 1$,  then this is \emph{not} the case i.e. mass equidistribution fails.
  \begin{prop}
  Let $\theta \geq 1$ and let $V:\R_+\to \R$ be a smooth function with compact support in $(1, \infty)$, which satisfies $\int_0^\infty V(y)dy/y^2\neq 0$ and let $\psi_V$ be the associated incomplete Eisenstein series. Then
  \begin{equation} \mu_f(M_{(k-1)^\theta}\psi_V)=o(\nu(M_{(k-1)^\theta}\psi_V )),  \end{equation}
  as $k\to \infty$. This means in particular that mass equidistribution fails for shrinking annuli around infinity below the Planck scale i.e. when $\theta\geq 1$.
  \end{prop}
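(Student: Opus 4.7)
The plan is to reduce the problem to the observation that an inner sum becomes empty. First I would invoke the identity \eqref{neverending} from the proof of Theorem~\ref{thm:calculating-variances}, specialized to the case $h=0$ (where the oscillating factor $(\sqrt{n(n+h)}/(n+h/2))^{k-1}$ trivializes to $1$), giving
\begin{equation*}
\mu_f(M_{(k-1)^\theta}\psi_V) = \frac{2\pi^2}{(k-1)L(1,\sym^2 f)}\sum_{n\geq 1}\lambda_f(n)^2\, V\!\left(\frac{(k-1)^{1-\theta}}{4\pi n}\right) + O_{V,\theta}(k^{-1-\theta+\e}),
\end{equation*}
noting that the underlying Mellin-transform derivation is insensitive to whether $\theta<1$ or $\theta\geq 1$.

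The key observation is then that for $\theta\geq 1$ this main-term sum is \emph{empty}. Indeed, $\supp V\subset (1,\infty)$ forces each nonvanishing term to satisfy $(k-1)^{1-\theta}/(4\pi n)>1$, i.e.\ $n<(k-1)^{1-\theta}/(4\pi)$; but for $\theta\geq 1$ one has $(k-1)^{1-\theta}/(4\pi)\leq 1/(4\pi)<1$, which no positive integer $n$ satisfies. Combined with the standard lower bound $L(1,\sym^2 f)\gg k^{-\e}$, this yields $\mu_f(M_{(k-1)^\theta}\psi_V)=O_{V,\theta}(k^{-1-\theta+\e})$.

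On the other hand, an easy unfolding gives $\nu(\psi_V)=\vol{M}^{-1}\int_0^\infty V(y)\,dy/y^2\neq 0$ by hypothesis, and the scaling computation from Section~\ref{squeezing-operator} gives $\nu(M_{(k-1)^\theta}\psi_V)=(k-1)^{-\theta}\nu(\psi_V)\asymp k^{-\theta}$. The ratio is therefore $O(k^{-1+\e})=o(1)$, which proves the proposition. The one delicate point is checking that the error $O(k^{-1-\theta+\e})$ in \eqref{neverending} remains valid in the regime $\theta\geq 1$; as a robustness check, a direct Laplace-method analysis of $\int V(y/(k-1)^\theta)y^{k-2}e^{-4\pi ny}\,dy$ shows that, since the saddle point $(k-2)/(4\pi n)$ lies strictly below $(k-1)^\theta$ for $\theta\geq 1$ and $n\geq 1$, the integral is in fact exponentially small in $k$, so the conclusion is robust against any concern about the error estimate. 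The main obstacle I anticipate is therefore the minor bookkeeping needed to verify the Mellin bound in this regime—once that is settled, the support argument drives everything.
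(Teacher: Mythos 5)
Your argument is correct and follows the paper's own proof essentially verbatim: apply \eqref{neverending} with $h=0$, observe that the main sum is empty for $\theta\geq 1$ because $(k-1)^{1-\theta}/(4\pi n)<1$ lies outside $\supp V$, so $\mu_f(M_{(k-1)^\theta}\psi_V)=O_\e(k^{-1-\theta+\e})$, while $\nu(M_{(k-1)^\theta}\psi_V)\asymp k^{-\theta}$. Your additional Laplace-method robustness check corresponds to the paper's closing remark that one can alternatively estimate the Fourier expansion of $f$ directly.
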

  \begin{proof}
 We use \eqref{neverending} and observe that the sum is identically zero,  since $(k-1)^{1-\theta}/(4\pi n)$ is less than one which is outside the support of  $V$. Therefore
  \begin{equation} \mu_f(M_{(k-1)^\theta}\psi_V)= O_\eps(k^{-1-\theta+\eps}), \end{equation}
  and, since $\nu(M_{k^\theta}\psi_V )\asymp  k^{-\theta}$, the proposition follows. Alternatively, one can directly estimate the Fourier expansion of $f$.
 \end{proof}

\section{Further extensions of  \texorpdfstring{$B_\theta(\cdot{,}\cdot)$}{Btheta} and computations at truncated eigenfunctions.}
Before we extend $B_\theta(\cdot{,}\cdot)$ we notice that on the set $C_{0,0}^\infty(M,B)$, $B_\theta(\cdot{,}\cdot)$ is symmetric with respect to the Laplacian.
\begin{lem}\label{first-symmetric}
The map $B_\infty: C_{0,0}^\infty(M,B) \times C_{0,0}^\infty(M,B) \to \C$ satisfies
$B_\infty(\Delta \psi,\varphi)=B_\infty( \psi,\Delta \varphi).$
\end{lem}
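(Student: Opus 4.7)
The plan is to reduce the question to ordinary ODE calculations on Fourier coefficients in the cusp. Write $\psi(z)=\sum_{m\in\Z}V_m^\psi(y)e(mx)$ and similarly for $\varphi$, and note that the Laplacian $\Delta=-y^2(\partial_x^2+\partial_y^2)$ preserves each Fourier mode, acting on $V_m(y)e(mx)$ as the ordinary differential operator $L_m V=-y^2V''+4\pi^2 m^2 y^2 V$, so that $V_m^{\Delta\psi}=L_m V_m^\psi$. I would split $B_\infty$ into its cuspidal part $B_\infty^{\mathrm c}$ (the sum over $m,n\neq 0$ from \eqref{Btheta}) and its Eisenstein part $B_\infty^{\mathrm e}$ (the triple integral involving $V_0^\psi$ and $V_0^\varphi$), and check the symmetry of each separately.

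For $B_\infty^{\mathrm c}$, fix $m,n\neq 0$ and set $a=|m|$, $b=|n|$. The multiplicative piece $4\pi^2 m^2 (y/a)^2 V_m^\psi(y/a)=4\pi^2 y^2 V_m^\psi(y/a)$ is manifestly $m$-independent and, after $y^2$ is absorbed into $dy/y^2$, matches the analogous piece coming from $L_n V_n^\varphi$, so it cancels in the difference $B_\infty^{\mathrm c}(\Delta\psi,\varphi)-B_\infty^{\mathrm c}(\psi,\Delta\varphi)$. What remains is the identity
\begin{equation}
\frac{1}{a^{2}}\int_0^\infty(V_m^\psi)''(y/a)\,\overline{V_n^\varphi(y/b)}\,f_{m,n}(y)\,dy=\frac{1}{b^{2}}\int_0^\infty V_m^\psi(y/a)\,\overline{(V_n^\varphi)''(y/b)}\,f_{m,n}(y)\,dy
\end{equation}
for the cuspidal weight $f_{m,n}$ appearing in $B_\infty$. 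When $f_{m,n}$ is independent of $y$ I substitute $u=y/a$ on the left, integrate by parts twice (the boundary terms vanishing because $V_m^\psi$ is supported in $y>1$ and rapidly decays), and then change variables $v=au/b$ to land exactly on the right-hand side.

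For $B_\infty^{\mathrm e}$, the key algebraic fact is that on smooth functions $W(y)$ the tilde operator $W\mapsto\widetilde W=(W'y^2)'$ and the radial Laplacian $\Delta_0 W=-y^2W''$ commute. In Mellin coordinates $\widehat{\widetilde W}(s)=s(s-1)\hat W(s)$ while $\widehat{\Delta_0 W}(s)=-s(s+1)\hat W(s)$, and these two scalar multipliers commute. Consequently $\widetilde{V_0^{\Delta\psi}}=\Delta_0\widetilde{V_0^\psi}$. Setting $F=\widetilde{V_0^\psi}$, $G=\widetilde{V_0^\varphi}$ and using the pointwise identity $(\Delta_0 F)(t/y_1)=-t^{2}\,\partial_t^{2}[F(t/y_1)]$, the $-t^2$ cancels the $1/t^2$ in the measure $dt/t^2$, and two integrations by parts in $t$ transfer the Laplacian from $F$ to $G$, yielding $B_\infty^{\mathrm e}(\Delta\psi,\varphi)=B_\infty^{\mathrm e}(\psi,\Delta\varphi)$.

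The main obstacle I anticipate is controlling boundary terms and convergence in the triple integral for $B_\infty^{\mathrm e}$: $b_2$ is only bounded, the weight $dt/t^2$ is singular at $t=0$, and neither $b_2(y_i)\,dy_i/y_i^2$ nor $dt/t^2$ is integrable on its own. The saving grace is that $V_0^\psi$ has compact support in $(0,\infty)$ and satisfies $\int_0^\infty V_0^\psi(y)\,dy/y^2=0$, so $\widetilde{V_0^\psi}=((V_0^\psi)'y^2)'$ is itself a derivative of a compactly supported function, hence compactly supported and with vanishing integral. This forces $\widetilde{V_0^\psi}(t/y_1)$ to vanish outside a bounded $y_1$-dependent window of $t$ bounded away from $0$, making every integration by parts and every exchange of order of integration legitimate. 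The analogous control for the cuspidal part is immediate from the rapid decay of $V_m^\psi$ in both $y$ and $|m|$, as already exploited in \eqref{cool-bound}.
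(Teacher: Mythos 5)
Your overall route is the same as the paper's: reduce to Fourier modes via $\Delta P_{V_m,m}=P_{L_mV_m,m}$ with $L_m=y^2\tfrac{d^2}{dy^2}-4\pi^2m^2y^2$ and then integrate by parts, which is precisely the ``straightforward modification'' of Luo--Sarnak that the paper invokes. Your cuspidal computation is correct as far as it goes: after the substitution $y\mapsto y/\abs{m}$ the potential term becomes the $m$-independent multiplier $-4\pi^2y^2$ and cancels, and when $f_{\theta,m,n}$ is constant the two integrations by parts (boundary terms vanishing by the support in $(1,\infty)$ and rapid decay) give the kinetic identity. Note, however, that this only treats constant $f_{\theta,m,n}$, i.e.\ $\theta\neq 1/2$: at $\theta=1/2$ the parts formula also differentiates the Gaussian weight and leaves terms of the shape $\int y\,(F'\bar G-F\bar G')e^{-cy^2}\,dy$ that do not cancel, so the case of half the Planck scale is not covered by your argument (nor addressed by the paper's one-line proof) and must either be excluded or treated by a different mechanism.

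The Eisenstein part contains a genuine gap in the justification, though your key algebraic input is right: $W\mapsto\widetilde W=(W'y^2)'$ and $W\mapsto y^2W''$ are both polynomials in the Euler operator $y\tfrac{d}{dy}$, hence commute, so $\widetilde{V_0^{\Delta\psi}}=L_0\widetilde{V_0^\psi}$. The problem is that you integrate by parts in $t$ with $y_1,y_2$ fixed and then integrate in $y_1,y_2$, and the claim that compact support makes ``every exchange of order of integration legitimate'' fails: on the support one has $t/A\ll y_i< t$, so near $t=0$ the absolute integrand has size $\asymp t^{-6}$ on a $(y_1,y_2)$-region of area $\asymp t^2$, and the triple integral in \eqref{Btheta} is not absolutely convergent ($\int_0 t^{-4}dt$ diverges); it is only defined with the $y_i$-integrations performed first (this is exactly the paper's remark that the inner $y_i$-integrals vanish for small $t$), so Fubini does not license your reordering. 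The repair keeps your ideas: set $h_\psi(t)=\int_0^\infty b_2(y)\widetilde{V_0^\psi}(t/y)\,y^{-2}dy$, so the Eisenstein part is $\tfrac{\pi}{4}\int_0^\infty h_\psi(t)\overline{h_\varphi(t)}\,t^{-2}dt$; differentiation under the integral sign (for $t$ in compacta the effective $y$-range is compact) together with your commutation identity gives $h_{\Delta\psi}(t)=t^2h_\psi''(t)$; one checks $h_\psi(t)=0$ for $t\le 1$ --- this uses $\int\widetilde V\,du=\int\widetilde V\,u^{-1}du=\int\widetilde V\,u^{-2}du=0$, the last identity being where the hypothesis $\int_0^\infty V_0^\psi(y)y^{-2}dy=0$ actually enters, not merely the vanishing of $\int\widetilde V$ as you assert --- and $h_\psi^{(j)}(t)\ll t^{-1-j}$; two integrations by parts in the single outer $t$-integral, with vanishing boundary terms, then yield the symmetry of the Eisenstein part.
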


\begin{proof}
Writing $\psi$ as in \eqref{psi-FE} we note that $\Delta P_{V_m^{\psi},m}(z)=P_{{L_mV_m^\psi},m}(z)$, where $L_m= y^2\frac{d^2}{dy^2}-4\pi^2m^2 y^2$ and that the support of $L_m V_m^\psi$ is contained in $(1,\infty]$ if this is the case for $V_m^\psi$.
 The argument is now a straightforward modification of \cite[p.~782]{LuoSarnak:2004a}.\end{proof}

We now extend $B_\theta(\psi_1,\psi_2)$ defined in \eqref{Btheta} on $C_{0,0}^\infty(M,B)$ to the slightly larger space $1_B C_{0,0}^\infty(M)$. This space includes for instance $1_B\cdot\phi$ where $\phi$ is a Hecke--Maass form, which together with the incomplete Eisenstein series of mean 0 actually span this entire space. We may define $B_\theta(\psi_1,\psi_2)$ on this slightly larger space by the same formula \eqref{Btheta}. The same arguments as after \eqref{Btheta} shows that the infinite sum converges.

Unfortunately we do not know how to extend Lemma \ref{first-symmetric} to this larger space. When trying to do the obvious generalization we are faced with certain boundary terms that we cannot dismiss. This means also that, contrary to the situation when $\theta=0$ studied by Luo and Sarnak \cite{LuoSarnak:2004a}, we do not know if truncated  Hecke--Maass forms diagonalize $B_\theta(\cdot{,}\cdot)$ for $\theta>0$.

Consider the  subspace $C_{\rm cusp}^\infty(M,B)\subset C_{0,0}^\infty(M,B)$ consisting of functions where the zero-th Fourier coefficient vanishes completely. Note also that functions in $1_BC_{\rm cusp}^\infty(M,B)\subset 1_BC_{0,0}^\infty(M)$ also have zero-th Fourier coefficients vanishing completely.  We make the following analysis. It is straightforward to check that the Sobolev norm on $1_BC_{\rm cusp}^\infty(M)$ defined by
\begin{equation} \label{sobolev-again}
    \norm{1_B\psi}^2_{2,N}=\sum_{j\leq N} \norm{1_B\frac{d^j\psi}{dx^j}}^2_{L^2(M)}
\end{equation} is indeed a norm. Note that for $1_B\psi\in C_{\rm cusp}^\infty(M)$ we may write
\begin{equation}\psi=\sum_{n\neq 0} V_n^{(\psi)}(y)e(nx),\end{equation} and we have
\begin{align}\label{sobolev-calc}\norm{1_B\frac{d^j\psi}{dx^j}}^2_{L^2(M)}=\sum_{n\neq 0}\abs{2\pi n}^{2j}\int_{0}^\infty\abs{1_{y\geq 1} V_m^{(\psi)}}^2\frac{dy}{y^2}.
\end{align}
\begin{prop}\label{continuity}\,
\begin{enumerate}
    \item For each $N=0,1,\ldots$ the set $C_{\rm cusp}^\infty(M,B)$ is dense in $1_BC_{\rm cusp}^\infty(M)$ with respect to $\norm{\cdot }_{2,N}$.
    \item There exist an absolute constant $c>0$ such that for $1_B\psi_i\in 1_BC_{\rm cusp}^\infty(M)$
\begin{equation}\abs{B_\theta(1_B\psi_1,1_B\psi_2)}\leq c \norm{1_B\psi_1 }_{2,1}\norm{1_B\psi_2 }_{2,1}.
\end{equation}
\item The form $B_\theta(\cdot{,}\cdot)$ is continuous on $1_BC_{\rm cusp}^\infty(M)\times 1_BC_{\rm cusp}^\infty(M)$ with respect to $\norm{\cdot }_{2,1}$.
\end{enumerate}
\end{prop}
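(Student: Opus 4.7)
The plan is to establish (1) by a smooth $y$-cutoff near the horocycle $y=1$, (2) by bounding the defining series \eqref{Btheta} via Schur's test applied to the gcd-kernel that arises, and (3) by sesquilinearity of $B_\theta$ together with the bound of (2).

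For (1), I would pick $\chi_\e\in C^\infty(\R_+,[0,1])$ with $\chi_\e(y)=0$ for $y\leq 1+\e$ and $\chi_\e(y)=1$ for $y\geq 1+2\e$, and set $\psi_\e(z):=\chi_\e(y)\psi(z)$ for $\psi\in C_{\rm cusp}^\infty(M)$. Since $\chi_\e$ depends only on $y$, multiplication by it preserves the vanishing of the zero-th Fourier coefficient, and the support of $\psi_\e$ is contained in $\{y\geq 1+\e\}\subset B$; hence $\psi_\e\in C_{\rm cusp}^\infty(M,B)$. The difference $1_B\psi-\psi_\e=(1_B-\chi_\e)\psi$ is supported in the compact band $\{1\leq y\leq 1+2\e\}$ of a fundamental domain, on which $\frac{d^j\psi}{dx^j}$ is uniformly bounded by smoothness of $\psi$. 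Since the measure $dx\,dy/y^2$ of this band is $O(\e)$, \eqref{sobolev-again} yields $\norm{1_B\psi-\psi_\e}_{2,N}^2=O(\e)\to 0$.

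For (2), the vanishing of the zero-th Fourier coefficient of $1_B\psi_i$ kills the second term of \eqref{Btheta}. The $m$-th Fourier coefficient of $1_B\psi$ is $1_{y\geq 1}V_m^{\psi}(y)$, so the inner integral in \eqref{Btheta} is naturally supported on $y\geq\max(\abs{m},\abs{n})$. Using $\abs{f_{\theta,m,n}}\leq 1$, Cauchy--Schwarz in $y$, and the substitutions $y=\abs{m}u$, $y=\abs{n}u$, I would arrive at
\begin{align}
\abs{B_\theta(1_B\psi_1,1_B\psi_2)}\leq \frac{\pi}{4}\sum_{m,n\neq 0}\frac{\tau_1((\abs{m},\abs{n}))}{\sqrt{\abs{m}\abs{n}}}\,A_m^{1/2}B_n^{1/2},
\end{align}
where $A_m=\int_1^\infty\abs{V_m^{\psi_1}(u)}^2du/u^2$ and $B_n$ is analogous. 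Setting $a_m:=\abs{m}A_m^{1/2}$, formula \eqref{sobolev-calc} gives $\sum_m a_m^2\leq (4\pi^2)^{-1}\norm{1_B\psi_1}_{2,1}^2$, and similarly for $b_n$. The right-hand side then equals $\frac{\pi}{4}\sum K(m,n)a_m b_n$ with kernel $K(m,n):=\tau_1((\abs{m},\abs{n}))(\abs{m}\abs{n})^{-3/2}$. Using the divisor bound $\tau_1(d)\ll d^{1+\e}$ and decomposing $n=dk$ with $d=(\abs{m},\abs{n})$ and $(k,\abs{m}/d)=1$, the row sum becomes
\begin{align}
\sum_{n\neq 0}K(m,n)\ll \abs{m}^{-3/2}\zeta(3/2)\sum_{d\mid\abs{m}}d^{\e-1/2}\ll \abs{m}^{-3/2+o(1)},
\end{align}
which is uniformly bounded in $m$; by symmetry the column sums are likewise bounded, and Schur's test produces $\sum K(m,n)a_m b_n\ll \norm{a}_{\ell^2}\norm{b}_{\ell^2}$, giving the claimed inequality.

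Part (3) follows at once from sesquilinearity and (2): the identity $B_\theta(\psi_1,\psi_2)-B_\theta(\varphi_1,\varphi_2)=B_\theta(\psi_1-\varphi_1,\psi_2)+B_\theta(\varphi_1,\psi_2-\varphi_2)$ bounds the difference by $c(\norm{\psi_1-\varphi_1}_{2,1}\norm{\psi_2}_{2,1}+\norm{\varphi_1}_{2,1}\norm{\psi_2-\varphi_2}_{2,1})$, and boundedness of the convergent sequences in $\norm{\cdot}_{2,1}$ gives continuity. The main obstacle is the Schur verification in (2), where one must track the gcd structure in $\tau_1((\abs{m},\abs{n}))$ and verify that the factor $\abs{m}^{\e-1/2}$ coming from divisors stays summable; the cutoff in (1) and the reduction in (3) are routine once this kernel estimate is in hand.
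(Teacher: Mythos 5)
Your proposal is correct and follows essentially the same route as the paper: a smooth cutoff in the $y$-variable for the density statement, Cauchy--Schwarz in the $y$-integral together with the divisor/gcd bound on $\tau_1((\abs{m},\abs{n}))$ to dominate $B_\theta$ by the $\abs{m}^2$-weighted coefficient sums appearing in $\norm{\cdot}_{2,1}$ via \eqref{sobolev-calc}, and sesquilinearity for continuity. The only cosmetic difference is in part (2), where you keep the gcd structure and apply Schur's test to the kernel $\tau_1((\abs{m},\abs{n}))(\abs{m}\abs{n})^{-3/2}$, while the paper bounds $\tau_1((\abs{m},\abs{n}))\ll_\e(\abs{m}\abs{n})^{1/2+\e}$, factors the double sum into a product of two single sums, and applies Cauchy--Schwarz against $\abs{m}^{-1/2-\e}$ in each; both hinge on the same divisor estimate and yield the same bound.
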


\begin{proof}
To see that $C_{\rm cusp}^\infty(M,B)$ is dense in $1_BC_{\rm cusp}^\infty(M)$ we approximate $1_B$ by a smooth cut-off as follows: Fix $w:\R\to \R_{\geq 0}$ smooth and supported in $[1/2,1]$ with $\int_0^1 w(t)dt=1$. For $0<\delta<1/2$ we define
$w_{\delta}(t):=\delta^{-1}w( t/\delta)$.
This is supported in $[\delta/2,\delta]$ and satisfies $\int_{0}^1 w_\delta(t)dt=1$. We then define the function $1_B^{\delta}:\R_+ \to \R$ as the convolution of $1_{y>1}$ and $w_\delta$ i.e.
\begin{equation} 1_B^{\delta}(y):=\int_0^\infty 1_{t>1}(t) w_\delta(y-t)dt. \end{equation}
We observe that $1_B^{\delta}$ is smooth and supported in $[1+\delta/2,\infty]$. It satisfies $0\leq 1_B^{\delta}(y)\leq 1$ and $1_B^{\delta}(y)=1$ for $y\geq 1+\delta$.

Let now $1_B\psi\in 1_BC_{\rm cusp}^\infty(M)$, and observe that $1_B^{\delta}\psi\in C_{\rm cusp}^\infty(M,B)$, where we use the same notation for $y\mapsto 1_B^{\delta}(y)$ and $x+iy\mapsto 1_B^{\delta}(y)$.
Furthermore
\begin{align}
    \norm{1_B\psi -1_B^\delta\psi}_{2,N}&= \norm{1_B(\psi -1_B^\delta\psi)}_{2,N}\\
    \label{sobolev-thingy}& \leq \max_{\substack{1\leq \Im(z)\leq 2\\ j\leq N} }\abs{\frac{d^j\psi}{dx^j}(z)}\sqrt{N+1}\int_{1}^2\abs{1-1_B^\delta(y)}\frac{dy}{y^2},
\end{align} which tends to zero as $\delta\to 0$. Since $1_B^\delta\psi\in C_{\rm cusp}^\infty(M,B)$ this proves that $C_{\rm cusp}^\infty(M,B)$ is dense in $1_BC_{\rm cusp}^\infty(M)$ with respect to $\norm{\cdot }_{2,N}$.

To prove the inequality for $B_\theta(\cdot{,}\cdot)$ we see from \eqref{Btheta},  the bound $\tau_1((\abs{m}, \abs{n}))\ll_\e \abs{mn}^{1+\epsilon}$, and Cauchy--Schwarz on the involved integral that for $1_B\psi_i\in 1_BC^\infty_{\rm cusp}(M)$ we have that $\abs{B_\theta(1_B\psi_1,1_B\psi_2)}$ is bounded by a constant times
\begin{equation}
    \sum_{m, n\neq 0}\abs{mn}^{1+\e}\left(\int_0^\infty \abs{1_{y/\abs{m}\geq 1}V^{(\psi_1)}_m\left(\frac{y}{\abs{m}}\right)}^2\frac{dy}{y^2}\int_0^\infty \abs{1_{y/\abs{n}\geq 1}V^{(\psi_2)}_n\left(\frac{y}{\abs{n}}\right)}^2\frac{dy}{y^2}\right)^{1/2}.
\end{equation}
Doing a change of variables this splits as a product of
\begin{equation}
\sum_{m\neq 0}\abs{m}^\e\left(\int_0^\infty \abs{1_{y\geq 1}V^{(\psi_1)}_m\left(y\right)}^2\frac{dy}{y^2}\right)^{1/2}
\end{equation}
times the same expression for $\psi_2$. Dividing  and multiplying the terms by $\abs{m}^{1/2+\e}$ we can use the Cauchy--Schwarz inequality to see that this is bounded by
\begin{equation}\left(\sum_{m\neq 0}\frac{1}{\abs{m}^{1+2\e}}\right)^{1/2}\left(
\sum_{m\neq 0}\abs{m}^{1+4\e}\int_0^\infty \abs{1_{y\geq 1}V^{(\psi_1)}_m\left(y\right)}^2\frac{dy}{y^2}\right)^{1/2}.
\end{equation}
Comparing with \eqref{sobolev-thingy} and \eqref{sobolev-again} we see that this is bounded by a constant times $\norm{1_B\phi_1}_{2,1}$, which proves the inequality for $B_\theta(\cdot{,}\cdot)$.

To see that $B_\theta(\cdot ,\cdot )$ is continuous on $1_BC_{\rm cusp }^\infty(M)$  we observe that, if we consider the sequence $(1_B\psi_{1,n},1_B\psi_{2,n})\to (1_B\psi_1,1_B\psi_2)$ with respect to $\norm{\cdot }_{2,1}$ as $n\to\infty$, then we can use that
\begin{align}
B_\theta(1_B\psi_{1,n},1_B\psi_{2,n})&-B_\theta(1_B\psi_1,1_B\psi_2)\\
&=B_\theta(1_B\psi_{1,n}-1_B\psi_1,1_B\psi_{2,n})+B_\theta(1_B\psi_1,1_B\psi_{2,n}-1_B\psi_2),
\end{align}
and the claim now follows easily from the inequality satisfied by $B_\theta(\cdot{,}\cdot)$.

\end{proof}

If $\phi$ is a Hecke--Maass form then $1_B\phi\in 1_BC_{\rm cusp}^\infty(M)$ and we consider   the expansion \begin{equation}
1_B\phi(z)=\sum_{m\neq 0} P_{1_{y>1}a_m^{(\phi)},m}(z).
\end{equation}
with $a_m^{(\phi)}(y)=\epsilon_{\phi,m}2\lambda_{\phi} (\abs{m})y^{1/2} K_{s-1/2}(2\pi \abs{m} y)$ and
where $\epsilon_{\phi,m}=1$, if $\phi$ is an even and $\epsilon_{\phi,m}=\sgn{(m)}$, if $\phi$ is odd.
It follows from this and \eqref{Btheta} that  $B_\theta(1_B\phi_1,1_B\phi_2)=0$, if either $\phi_1$ or $\phi_2$ is odd. This is also the case when $\theta=0$ as proved in \cite{LuoSarnak:2004a} as follows from  $L(\phi,1/2)=0$ for $\phi$ odd. If $\phi_1,\phi_2$ are both even Hecke--Maass forms with Laplace eigenvalues $s_1(1-s_1)$ and $s_2(1-s_2)$, respectively, we see that $ B_\theta(1_B\phi_1,1_B\phi_2)$ equals
\begin{equation}4\pi \sum_{m,n\geq 1}  \frac{\tau_1((m,n))\lambda_{\phi_1}(m)\lambda_{\phi_2}(n)}{(mn)^{1/2}} \int_{\max(m,n)}^\infty K_{s_1-1/2}(2\pi y) \overline{K_{s_2-1/2}(2\pi y)}\frac{dy}{y}, \end{equation}
for $0<\theta<1/2$ and for $\theta=1/2$, the number $B_{1/2}(1_B\phi_1,1_B\phi_2)$ equals
\begin{equation} 4\pi \sum_{m,n\geq 1}  \frac{\tau_1((m,n))\lambda_{\phi_1}(m)\lambda_{\phi_2}(n)}{(mn)^{1/2}} \int_{\max(m,n)}^\infty \!\!\!\!\!\!\!\!\!\!\!\!\!\!K_{s_1-1/2}(2\pi y)\overline{K_{s_2-1/2}(2\pi y)} e^{-2\pi^2 y^2(m^2+n^2)}\frac{dy}{y}, \end{equation}
as claimed in Theorem \ref{intro:QuantumVariance} (iv).

It is a deep number-theoretic fact that the central value of $L(\phi_j,s)$ is non-negative. Luo and Sarnak  \cite{LuoSarnak:2004a}  observed that this follows from noticing  that these numbers are essentially the eigenvalues of the non-negative Hermitian form $B_0$. We are now ready to draw a similar conclusion for $B_\theta(1_B\phi,1_B\phi)$ as computed above from the fact that $B_\theta(\cdot{,}\cdot)$ is non-negative on   $C_{0,0}^\infty(M,B)$ for any $0\leq \theta <1$ . Since we only know beforehand that $B_\theta(\cdot{,}\cdot)$ is non-negative on the smaller space  $C_{0,0}^\infty(M,B)$, we use the continuity properties of $B_\theta(\cdot, \cdot)$.

\begin{proof}[Proof of Corollary \ref{geq0}] We have seen above that the expression on the right of Corollary \ref{geq0} equals, up to a positive constant, the value $B_{\theta}(1_B\phi, 1_B\phi)$. It follows from Proposition \ref{continuity} there exist $\{\psi_n\}\subset C_{\rm cusp}(M,B)$ such that $\psi_n\to 1_B\phi$ with respect to $\norm{\cdot}_{2,1}$. By Theorem \ref{qvargeneraltest} we may conclude, since the left-hand side of \eqref{qvarianceM,B} is non-negative, that $B_\theta(\psi_n, \psi_n)\geq 0$. By the continuity properties of $B_\theta(\cdot{,}\cdot)$ in Proposition \ref{continuity} we conclude that $B_{\theta}(1_B\phi, 1_B\phi)\geq 0$, which proves the result.\end{proof}
Of course one may make a conclusion  analogous to that of Corollary \ref{geq0} for the case $\theta=1/2$, where the integrand gets multiplied by $e^{-2\pi^2y^2(m^2+n^2)}$.

\bibliographystyle{amsplain}
\providecommand{\bysame}{\leavevmode\hbox to3em{\hrulefill}\thinspace}
\providecommand{\MR}{\relax\ifhmode\unskip\space\fi MR }
\providecommand{\MRhref}[2]{%
  \href{http://www.ams.org/mathscinet-getitem?mr=#1}{#2}
}
\providecommand{\href}[2]{#2}

\end{document}